\newtheorem{theorem}{Theorem}[section]
\newtheorem{lemma}[theorem]{Lemma}
\newtheorem{remark}{Remark}[section]
\newtheorem{assumption}{Assumption}[section]
\newcommand{\p}{\partial}
\title{Two ADI compact difference methods for variable-exponent diffusion wave equations}
\author{Hao Zhang\thanks{School of Computer Science and Engineering, Sun Yat-sen University, Guangzhou 510006, Guangdong, P. R. China. (Email: zhangh925@mail2.sysu.edu.cn) }
\and
Kexin Li\thanks{Corresponding author. School of Statistics and Mathematics, Yunnan University of Finance and Economics, Kunming 650221, Yunnan, P. R. China. (Email: likx1213@163.com) }
\and
Wenlin Qiu\thanks{Corresponding author. School of Mathematics, Shandong University, Jinan 250100, Shandong, P. R. China. (Email: wlqiu@sdu.edu.cn) }
}
\date{}
\begin{document}

\maketitle

\begin{abstract}
In this work, we study two-dimensional diffusion-wave equations with variable exponent, modeling mechanical diffusive wave propagation in viscoelastic media with spatially varying properties. We first transform the diffusion-wave model into an equivalent form via the convolution method. Two time discretization strategies are then applied to approximate each term in the transformed equation, yielding two fully discrete schemes based on a spatial compact finite difference method. To reduce computational cost, the alternating direction implicit (ADI) technique is employed. We prove that both ADI compact schemes are unconditionally stable and convergent. Under solution regularity, the first scheme achieves $\alpha(0)$-order accuracy in time and fourth-order accuracy in space, while the second scheme attains second-order accuracy in time and fourth-order accuracy in space. Numerical experiments confirm the theoretical error estimates and demonstrate the efficiency of the proposed methods.

\vskip 1mm
\textbf{Keywords:} Two-dimensional diffusion wave, variable exponent, averaged PI rule, ADI compact difference, stability and convergence.
\end{abstract}

\section{Introduction}

Fractional partial differential equations (FPDEs), owing to their inherent nonlocal and memory characteristics, have been widely employed to model a variety of physical and engineering processes \cite{wulibeijing1}. They have proved effective in simulating diverse phenomena, such as anomalous diffusion \cite{wulibeijing2}, viscoelasticity \cite{wulibeijing3}, biological systems \cite{wulibeijing4}, and quantum mechanics \cite{wulibeijing5}. Recent studies further suggest that the interactions among different media may evolve in many dynamic processes. In such settings, FPDEs with variable exponents provide a natural modeling framework. Variable-exponent formulations have therefore become increasingly practical and relevant; see, for example, \cite{SunZha,SunCha,wulibeijing6}. For these reasons, FPDEs with variable exponents have attracted growing attention, both in the development of mathematical theory and in numerical approximation \cite{wulibeijing7,GarGiu,Huang,Hong,Jia,ZenZhaKar,Zayernouri,ZhuLiu}.

In this paper, we investigate the following diffusion-wave model with the variable exponent
\begin{equation}\label{VtFDEs}
\begin{array}{c}
(k*\p_t^2 u)(\bm x,t)-  \Delta u (\bm x,t)= f(\bm x,t) ,~~(\bm x,t) \in \Omega\times(0,T],
\end{array}\end{equation}
equipped with initial-boundary conditions
\begin{equation}\label{ibc}
 u(\bm x,0)=u_0(\bm x),~\p_tu(\bm x,0)=\bar u_0(\bm x),~\bm x\in \Omega; \quad u(\bm x,t) = 0,~(\bm x,t) \in \p \Omega\times[0,T]. \end{equation}
Let $\Omega \subset \mathbb{R}^2$ be a simply-connected bounded domain with a piecewise smooth, convex boundary $\partial \Omega$, and let $\bm{x} := (x,y)$ denote the spatial coordinates. The two-dimensional Laplacian is defined by
$\Delta u := \partial_x^2 u + \partial_y^2 u$,
and $f$, $u_0$, and $\bar{u}_0$ denote the source term and initial values, respectively. For simplicity, we focus on a rectangular domain
$\Omega = (l_1, r_1) \times (l_2, r_2)$.
 The variable exponent
fractional derivative of order $1<\alpha(t)<2$ is defined by the symbol $*$ of
convolution \cite{LorHar}
\begin{equation}\nonumber
(k*\p_t^2 u)(\bm x,t):=\int_0^tk(t-s)\p_s^2u(\bm x,s)ds,~~ k(t):=\frac{t^{1-\alpha(t)}}{\Gamma(2-\alpha(t))}.
\end{equation}
Throughout this work, we assume that variable exponent $\alpha(t)$ is three times differentiable with $|\alpha'(t)|+|\alpha''(t)|+|\alpha'''(t)|\le Q$ for some constant $Q>0$. Throughout this work, $Q$ represents a positive constant whose value may differ in various contexts.

It is well known that the theory of fractional differential equations with variable exponents is highly intricate \cite{DieFor}, making closed-form analytical solutions virtually unattainable. Although variable-exponent operators share the nonlocal and weakly singular features of their constant-order counterparts, the absence of convolutional structures creates additional challenges for both mathematical analysis and numerical treatment. In recent years, increasingly accurate and efficient numerical methods have been developed for such problems. For example, Chen et al. \cite{Cchen} proposed a scheme with first-order temporal convergence for variable-exponent fractional advection–diffusion equations using Fourier analysis, while Zeng et al. \cite{ZenZhaKar} employed a spectral collocation method for variable-exponent fractional diffusion equations. Further related contributions include \cite{KiaSocYam,LiWanWan,Liang,Ma,ZheWanJMAA,ZheWanSINUM,ZheWanIMA}.

Although numerous numerical studies address variable-exponent time- and space-fractional problems, the rigorous analysis of diffusion-wave type equations such as \eqref{VtFDEs}-\eqref{ibc} remains scarce. \textit{The primary difficulty lies in the variable-exponent Abel kernel $k(t)$, which cannot be treated analytically by conventional techniques and may fail to be positive definite or monotone.} Some progress has been made for variable-exponent fractional diffusion-wave models involving an additional temporal leading term $\partial_{t}^{2}u$. For instance, Zheng et al. \cite{zheWanCNSNS} employed spectral decomposition to reduce the problem to an ordinary differential equation and designed a first-order temporal scheme, while \cite{duruilian} developed and analyzed a second-order finite difference method. Nevertheless, to the best of the author’s knowledge, very few numerical methods exist for model \eqref{VtFDEs}–\eqref{ibc}, where $k * \p_t^2 u$ serves as the leading term, that attain second-order accuracy in time.

Recently, a convolution-based approach was proposed in \cite{Zhe}, which transforms variable-exponent subdiffusion problems into more tractable formulations, facilitating rigorous analysis. Building on this idea, Qiu and Zheng \cite{Qiu1} reformulated the models \eqref{VtFDEs}-\eqref{ibc} as integro-differential equations and studied their well-posedness and solution regularity. Motivated by these works, we aim to develop appropriate numerical discretizations for models \eqref{VtFDEs}-\eqref{ibc} in two-dimensional domains.

It is well known that, compared with one-dimensional problems, spatial discretization in two dimensions substantially increases computational complexity. To address this, various acceleration techniques have been developed for multidimensional problems \cite{duruilian,Jiachenwang,Meerschaert,Qiu,Tadjeran}. Notably, the ADI method decomposes a multidimensional problem into a set of independent, easily solvable one-dimensional problems, significantly reducing computational cost and enhancing efficiency \cite{Liao,Qiao}. Motivated by these advantages, we employ the ADI approach in this work to construct efficient numerical schemes for solving two-dimensional variable-exponent diffusion-wave models.

The main contributions of this work are summarized as follows.

\textbf{({\romannumeral1})} Building on the convolution approach in \cite{Qiu1,Zhe}, we transform the variable-exponent diffusion-wave model into an equivalent integro-differential formulation. The reconstructed model possesses favorable properties that enable rigorous numerical analysis using suitable time discretization schemes and facilitate efficient computation when combined with spatial discretization methods.

\textbf{({\romannumeral2})}
Compared with \cite{Qiu1}, we impose a weaker condition, $\alpha'(0)=0$, on $\alpha(t)$. This not only relaxes the restrictions on the generalized identity function $g$ (see Lemma \ref{lemma3.1}) but also significantly broadens the admissible range of $\alpha(t)$, allowing the model to capture a wider class of variable-exponent behaviors.

\textbf{({\romannumeral3})}
Using a discrete energy technique, we establish the unconditional stability of the two ADI compact difference schemes and derive their error estimates, with convergence orders $\tau^{\alpha(0)} + h_1^4 + h_2^4$ and $\tau^2 + h_1^4 + h_2^4$, respectively. Numerical experiments confirm that both schemes attain the expected accuracy, while the ADI approach substantially reduces computational cost compared with the standard method.

The remainder of this paper is organized as follows. Section \ref{sec2} presents preliminaries on temporal and spatial discretizations. In Section \ref{sec2.5}, the original model is reconstructed to facilitate the theoretical analysis. In Sections \ref{sec3} and \ref{sec4}, we develop and analyze the formally second-order and accurately second-order ADI compact difference schemes, respectively. Finally, Section \ref{sec5} reports numerical experiments to assess the accuracy and validate the theoretical results.

\section{Preliminaries}\label{sec2}
In this section, we introduce some preliminary notations and lemmas to be
used in the subsequent sections.
\subsection{The temporal preliminaries}
For a positive integer $N$, define a uniform partition on the time interval $[0,T]$
by $t_{n}=n\tau$ for $0\le n\le N$ with $\tau=T/N$. Herein, we set
\begin{equation*}
t_{n-\frac{1}{2}}=\frac{t_{n}+t_{n-1}}{2},\quad 1\le n\le N,
\end{equation*}
and $v^{n}=v(t_{n})$ for $0\le n\le N$. Let ${\mathcal G}:=\{v^{n}\}_{n=0}^{N}$
be a temporal grid space. Then, for $v^{n}\in \mathcal{G}$, we denote the helpful
notations as
\begin{equation*}
v^{n-\frac{1}{2}}=\frac{1}{2}\left(v^{n}+v^{n-1}\right),\quad \delta_{t}v^{n-\frac{1}{2}}=\frac{v^{n}-v^{n-1}}{\tau},\quad 1\le n\le N.
\end{equation*}
Below, we introduce two different numerical approximation methods for handling
integral with a weakly singular kernel, which is defined as
\begin{equation}\label{e2.1}
I^{(\alpha)}\phi(t)=\int_{0}^{t}\frac{(t-s)^{\alpha-1}}{\Gamma(\alpha)}\phi(s)ds,\quad t\in (0,T].
\end{equation}

Firstly, the trapezoidal convolution quadrature rule approximating $I^{(\alpha)}\phi(t_{n})$
is given by
\begin{equation*}
\mathcal{Q}_{n}^{(\alpha)}\phi=\tau^{\alpha} \left( \sum\limits_{p=0}^n \chi_{p}^{(\alpha)} \phi^{n-p} +  \rho_n^{(\alpha)} \phi^0 \right),
\end{equation*}
where the quadrature weights $\chi_{p}^{(\alpha)}$ are obtained from the generating function
\begin{equation}\label{e2.2}
  \left(\frac{2(1-z)}{1+z}\right)^{-\alpha}=\sum_{p=0}^{\infty}\chi_{p}^{(\alpha)}z^{p},
\end{equation}
which follows from \cite[Lemma 5.3]{Lub} that
\begin{equation*}
\chi_{n}^{(\alpha)}=2^{-\alpha}\sum_{s=0}^n\sigma_s^\alpha\varrho_{n-s}^\alpha,\quad n\geq0,
\end{equation*}
with
\begin{equation*}
  \sigma_{s}^{\alpha}=\frac{\Gamma(\alpha+s)}{\Gamma(\alpha)\Gamma(s+1)},\quad\varrho_{s}^{\alpha}=\frac{\Gamma(\alpha+1)}{\Gamma(\alpha-s+1)\Gamma(s+1)},\quad s\geq0.
\end{equation*}
Besides, to ensure the integral formally to the second order, we take the correction
quadrature weights $\rho_{n}^{(\alpha)}$, which can be determined by taking $\phi=1$,
then
\begin{equation*}
\rho_{n}^{(\alpha)}=\frac{n^{\alpha}}{\Gamma(\alpha+1)}-\sum_{p=0}^{n}\chi_{p}^{(\alpha)}.
\end{equation*}
This is the quadrature rule used in the formally second-order ADI compact difference
scheme. Then, there are several important lemmas about this rule.
\begin{lemma}\label{CQerror}\cite{Qiu}
If $\phi$ is real and continuously differentiable when $t\in(0,T]$, and $\partial_{t}^{2}\phi$
is continuous and integrable in $0<t\le T$, then, for $\alpha\in (0,1)$ and $n\ge 1$,
the trapezoidal quadrature error can be bounded by
\begin{equation*}
\begin{split}
\left|I^{(\alpha)}\phi(t_{n})-\mathcal{Q}_{n}^{(\alpha)}\phi\right|\le Q\Biggl[&\tau^{2}t_{n}^{\alpha-1}|\partial_{t}\phi(0)|+\tau^{\alpha+1}\int_{t_{n-1}}^{t_{n}}|\partial_{s}^{2}\phi(s)|ds+\tau^{2}\int_{0}^{t_{n-1}}\left(t_{n}-s\right)^{\alpha-1}|\partial_{s}^{2}\phi(s)|ds\Biggl].
\end{split}
\end{equation*}
\end{lemma}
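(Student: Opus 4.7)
The plan is to use a Taylor expansion of $\phi$ with integral remainder about $t=0$, namely
$$\phi(s) = \phi(0) + s\,\partial_t\phi(0) + \int_0^s (s-r)\,\partial_r^2\phi(r)\,dr,$$
and to exploit the linearity of both $I^{(\alpha)}$ and $\mathcal{Q}_n^{(\alpha)}$ to analyze the three pieces independently. This cleanly separates the contributions driven by the low regularity at $t=0$ (the constant and linear terms) from the smoother bulk contribution carrying the second derivative, so that each of the three terms on the right-hand side of the target bound can be traced to a specific piece of the decomposition.

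First, I would handle the constant piece: by the very definition $\rho_n^{(\alpha)} = n^{\alpha}/\Gamma(\alpha+1) - \sum_{p=0}^{n}\chi_p^{(\alpha)}$, the rule satisfies $\mathcal{Q}_n^{(\alpha)} 1 = t_n^{\alpha}/\Gamma(\alpha+1) = I^{(\alpha)} 1(t_n)$, so the constant contribution produces zero error. Second, for the linear term $s\mapsto s\,\partial_t\phi(0)$, I would compute $I^{(\alpha)}(s\,\partial_t\phi(0))(t_n) = t_n^{\alpha+1}\,\partial_t\phi(0)/\Gamma(\alpha+2)$ exactly, and then evaluate $\mathcal{Q}_n^{(\alpha)}(s\,\partial_t\phi(0))$ by substituting $\phi^p = p\tau\,\partial_t\phi(0)$. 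The error is then recast in terms of the generating function \eqref{e2.2} via a Cauchy-integral representation of $\chi_p^{(\alpha)}$, and sharp asymptotics near the boundary $p\approx n$ deliver the first term $\tau^2 t_n^{\alpha-1}|\partial_t\phi(0)|$, in the standard fashion of Lubich's convolution-quadrature analysis \cite{Lub}.

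Third, for the smooth remainder $R(s):=\int_0^s (s-r)\partial_r^2\phi(r)\,dr$, I would swap the order of integration, writing
$$I^{(\alpha)} R(t_n) = \int_0^{t_n}\!\!\int_0^s \frac{(t_n-s)^{\alpha-1}}{\Gamma(\alpha)}(s-r)\,\partial_r^2\phi(r)\,dr\,ds,$$
and similarly for the discrete sum $\mathcal{Q}_n^{(\alpha)}R$, then invoke local truncation estimates for the trapezoidal rule applied to the smooth $s$-integrand on each subinterval $[t_{k-1},t_k]$. On the final subinterval $[t_{n-1},t_n]$, the kernel carries the weakly singular factor $(t_n-s)^{\alpha-1}$ whose integral over this subinterval produces $\tau^{\alpha}$, and combining with the $\tau$ residual at an uncorrected endpoint yields the second term $\tau^{\alpha+1}\int_{t_{n-1}}^{t_n}|\partial_s^2\phi(s)|\,ds$. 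On the interior subintervals $[t_{k-1},t_k]$ with $k\le n-1$, the kernel $(t_n-s)^{\alpha-1}$ is smooth, so the trapezoidal rule recovers its full second order, giving the last term $\tau^2 \int_0^{t_{n-1}}(t_n-s)^{\alpha-1}|\partial_s^2\phi(s)|\,ds$.

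The main obstacle I expect is extracting the precise factor $\tau^2 t_n^{\alpha-1}$ in the linear-data step. This requires sharp asymptotics of the weights $\chi_p^{(\alpha)}$ at $p$ close to $n$ (a boundary-layer regime), equivalently a careful contour deformation in the Cauchy integral representation of $(2(1-z)/(1+z))^{-\alpha}$, combined with a matching asymptotic for the correction weight $\rho_n^{(\alpha)}$. Once this boundary-layer estimate is in hand, the remaining contributions follow from routine trapezoidal-quadrature error analysis. Since the result is cited to \cite{Qiu}, the detailed weight asymptotics would be adapted from that reference.
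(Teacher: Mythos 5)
The paper itself gives no proof of this lemma: it is imported verbatim from \cite{Qiu} (which in turn rests on Lubich's convolution-quadrature error theory \cite{Lub,Lub2}), so there is no in-paper argument to compare against. Judged on its own terms, your overall plan --- Taylor expansion with integral remainder at $t=0$, exactness on constants by the very definition of $\rho_n^{(\alpha)}$, and Lubich-type weight asymptotics for the linear part yielding the $\tau^{2}t_{n}^{\alpha-1}|\partial_t\phi(0)|$ term --- is exactly the standard route, and your first two steps are correct.

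The genuine gap is in your third step. You treat $\mathcal{Q}_{n}^{(\alpha)}$ as if it were a \emph{product-integration} trapezoidal rule, i.e.\ as if its error decomposed into local truncation errors of the trapezoidal rule on each subinterval $[t_{k-1},t_k]$ against the kernel $(t_n-s)^{\alpha-1}$. It is not: the weights $\chi_{p}^{(\alpha)}$ are defined globally through the generating function $\bigl(2(1-z)/(1+z)\bigr)^{-\alpha}$ and are not of the form ``kernel value times local interpolation weight,'' so there is no subinterval-by-subinterval localization of the error, and the sentence ``on interior subintervals the kernel is smooth, so the trapezoidal rule recovers its full second order'' has no direct meaning for this quadrature. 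The correct way to finish is to push the Taylor remainder through the \emph{same} machinery as your linear-data step: writing $R(s)=\int_{0}^{t_n}(s-r)_{+}\,\partial_r^{2}\phi(r)\,dr$ and using linearity/Fubini gives
\begin{equation*}
I^{(\alpha)}R(t_{n})-\mathcal{Q}_{n}^{(\alpha)}R=\int_{0}^{t_{n}}E_{n}\bigl((\cdot-r)_{+}\bigr)\,\partial_{r}^{2}\phi(r)\,dr,
\qquad E_{n}(\psi):=I^{(\alpha)}\psi(t_{n})-\mathcal{Q}_{n}^{(\alpha)}\psi,
\end{equation*}
and the boundary-layer weight asymptotics you already invoke for the linear term yield $|E_{n}((\cdot-r)_{+})|\le Q\tau^{2}(t_{n}-r)^{\alpha-1}$ for $r\le t_{n-1}$ and $|E_{n}((\cdot-r)_{+})|\le Q\tau^{\alpha+1}$ for $r\in(t_{n-1},t_{n}]$, which are precisely the last two terms of the stated bound. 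So the decomposition and the form of the answer are right, but the mechanism you describe for the remainder would not survive being written out; it must be replaced by the kink-function estimate above.
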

\begin{lemma}\label{CQpositive}\cite{Lub2}
Let $v^{n}\in \mathcal{G}$ and $\chi_{p}^{(\alpha)}$ be defined in \eqref{e2.2}, then
it holds that
\begin{equation*}
\sum_{n=0}^{N}\left(\sum_{p=0}^{n}\chi_{p}^{(\alpha)}v^{n-p}\right)v^{n}\ge 0.
\end{equation*}
\end{lemma}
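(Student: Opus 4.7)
The plan is to establish this positivity via the classical generating-function and Parseval argument for convolution quadrature weights. The key observation is that the double sum is a quadratic form in $\{v^n\}$ whose symbol is the boundary value $K(e^{i\theta})$ of the generating function $K(z):=\left(\frac{2(1-z)}{1+z}\right)^{-\alpha}$, so that non-negativity of $\mathrm{Re}\,K(e^{i\theta})$ on the unit circle forces non-negativity of the whole sum.

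First, I would extend $\{v^n\}$ by zero outside $\{0,1,\dots,N\}$ and introduce the trigonometric polynomial $\hat v(\theta):=\sum_{n=0}^{N}v^n e^{in\theta}$. Because $v^{n-p}=0$ for $p>n$, the inner sum $w^n:=\sum_{p=0}^{n}\chi_p^{(\alpha)}v^{n-p}$ coincides with the full discrete convolution $\sum_{p\ge 0}\chi_p^{(\alpha)}v^{n-p}$, whose generating series is $K(z)\sum_n v^n z^n$. Applying Parseval's identity to the zero-extended sequences then yields
\begin{equation*}
\sum_{n=0}^{N}w^n v^n=\frac{1}{2\pi}\int_{-\pi}^{\pi}K(e^{i\theta})|\hat v(\theta)|^2\,d\theta=\frac{1}{2\pi}\int_{-\pi}^{\pi}\mathrm{Re}\,K(e^{i\theta})\,|\hat v(\theta)|^2\,d\theta,
\end{equation*}
where the second equality uses that the left-hand side is real-valued.

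Next, I would establish pointwise non-negativity of $\mathrm{Re}\,K(e^{i\theta})$. Using $1-e^{i\theta}=-2i\sin(\theta/2)e^{i\theta/2}$ and $1+e^{i\theta}=2\cos(\theta/2)e^{i\theta/2}$, the bilinear expression collapses to $\frac{2(1-e^{i\theta})}{1+e^{i\theta}}=-2i\tan(\theta/2)$. With the principal branch of the fractional power, for $\theta\in(0,\pi)$ this gives
\begin{equation*}
K(e^{i\theta})=\bigl(-2i\tan(\theta/2)\bigr)^{-\alpha}=e^{i\alpha\pi/2}\bigl(2\tan(\theta/2)\bigr)^{-\alpha},
\end{equation*}
so $\mathrm{Re}\,K(e^{i\theta})=\cos(\alpha\pi/2)\bigl(2\tan(\theta/2)\bigr)^{-\alpha}\ge 0$ in the admissible range $\alpha\in(0,1)$; the interval $(-\pi,0)$ is handled by complex conjugation. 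Combining the two steps proves the lemma.

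The main technical obstacle is analytic rather than algebraic: the real part carries an integrable singularity of order $|\theta|^{-\alpha}$ at $\theta=0$, so Parseval's identity must be invoked with care, for instance by first working on $|z|=r<1$ (where everything converges absolutely) and then passing to $r\uparrow 1$ via dominated convergence, noting that $\hat v$ is a bounded polynomial of degree $N$. Beyond this, the only delicate point is the branch choice for $z^{-\alpha}$; all remaining computations are routine bookkeeping.
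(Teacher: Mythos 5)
Your proposal is correct and follows the classical generating-function/Parseval argument of Lubich, Sloan and Thom\'ee, which is precisely the source the paper cites for this lemma without reproducing a proof; the key facts (the M\"obius map $z\mapsto 2(1-z)/(1+z)$ sends the unit disk into the right half-plane, so $\mathrm{Re}\,K\ge 0$ there, and the limit $r\uparrow 1$ handles the integrable singularity at $\theta=0$) are all in place. No gaps.
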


After introducing the trapezoidal convolution quadrature rule,
we turn our attention to the second method for handling \eqref{e2.1}.
More precisely, the second method does not directly approximate \eqref{e2.1},
but rather integrates it with respect to $t$ from $t_{n-1}$ to $t_{n}$, and then
multiplies by $1/\tau$, i.e.,
\begin{equation}\label{e2.3}
  \frac{1}{\tau}\int_{t_{n-1}}^{t_{n}}\int_{0}^{t}\beta_{\alpha}(t-s)\phi(s)dsdt,\quad \text{for}~\beta_{\alpha}(t)=\frac{t^{\alpha-1}}{\Gamma(\alpha)}.
\end{equation}
In order to approximate \eqref{e2.3} at the level $t=t_{n}$, we refer to the averaged PI rule suggested by McLean and Mustapha \cite{Mclean},
\begin{equation*}
\mathcal{P}^{n-\frac{1}{2}}_{\alpha}\phi=\frac{1}{\tau}\int_{t_{n-1}}^{t_{n}}I^{(\alpha)}\breve{\phi}(t)dt=\lambda_{n,1}\phi^{1}+\sum_{j=2}^{n}\lambda_{n,j}\phi^{j-\frac{1}{2}},
\end{equation*}
in which we utilize the piecewise constant approximation
\begin{equation*}
\breve{\phi}(t)=
\begin{cases}
  \phi^1,&t_0<t<t_1,\\
  \phi^{n-\frac{1}{2}},&t_{n-1}<t<t_n,\quad n\ge2,
\end{cases}
\end{equation*}
with the weights
\begin{equation*}
  \lambda_{n,j}=\frac{1}{\tau}\int_{t_{n-1}}^{t_n}\int_{t_{j-1}}^{\min\{t,t_j\}}\beta_{\alpha}(t-s)dsdt>0,\quad1\leq j\leq n.
\end{equation*}
This is the PI rule used in the accurate second-order ADI compact difference scheme, and the local truncation errors between $\mathcal{P}_{\alpha}^{n-\frac{1}{2}}\phi$ and
\eqref{e2.3} will be discussed in Section \ref{sec4}. Furthermore, a lemma concerning this rule, formulated in terms of a nonnegative quadratic form, is required and serves as a key ingredient in the subsequent analysis.

\begin{lemma}\label{PIpositive}
For any grid function $v^{n}\in \mathcal{G}$ and $\alpha\in (0,1)$, we have
\begin{equation*}
v^{1}\mathcal{P}_{\alpha}^{\frac{1}{2}}v+\sum_{n=2}^{N}v^{n-\frac{1}{2}}\mathcal{P}_{\alpha}^{n-\frac{1}{2}}v\ge 0.
\end{equation*}
\end{lemma}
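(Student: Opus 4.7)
The plan is to reduce the discrete quadratic form to a continuous one involving the Abel kernel, then invoke the classical positive-definiteness of the fractional integral.

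First, I would use the integral representation
\[
\mathcal{P}_{\alpha}^{n-\frac{1}{2}}v=\frac{1}{\tau}\int_{t_{n-1}}^{t_{n}}I^{(\alpha)}\breve{v}(t)\,dt=\frac{1}{\tau}\int_{t_{n-1}}^{t_{n}}\int_{0}^{t}\beta_{\alpha}(t-s)\breve{v}(s)\,ds\,dt
\]
together with the key observation that, by the very definition of the piecewise-constant interpolant, $\breve{v}(t)=v^{1}$ for $t\in(t_{0},t_{1})$ and $\breve{v}(t)=v^{n-\frac{1}{2}}$ for $t\in(t_{n-1},t_{n})$, $n\ge 2$. Therefore the factor $v^{n-\frac{1}{2}}$ (and $v^1$ in the first term) can be absorbed into the outer integral as $\breve{v}(t)$, giving
\[
v^{1}\mathcal{P}_{\alpha}^{\frac{1}{2}}v+\sum_{n=2}^{N}v^{n-\frac{1}{2}}\mathcal{P}_{\alpha}^{n-\frac{1}{2}}v=\frac{1}{\tau}\int_{0}^{T}\breve{v}(t)\int_{0}^{t}\beta_{\alpha}(t-s)\breve{v}(s)\,ds\,dt.
\]
This reduces the problem to showing that the Abel convolution is positive in the $L^{2}(0,T)$ pairing, for any function in $L^{2}(0,T)$ (in particular, for the piecewise-constant $\breve{v}$).

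Next, I would establish this positivity by means of the standard Bernstein–Laplace representation
\[
\beta_{\alpha}(t)=\frac{t^{\alpha-1}}{\Gamma(\alpha)}=\frac{\sin(\alpha\pi)}{\pi}\int_{0}^{\infty}\mu^{-\alpha}e^{-\mu t}\,d\mu,\qquad \alpha\in(0,1),
\]
so that it suffices to control the exponential-kernel form for each fixed $\mu>0$. Setting $w_{\mu}(t):=\int_{0}^{t}e^{-\mu(t-s)}\breve{v}(s)\,ds$, one finds $w_{\mu}'+\mu w_{\mu}=\breve{v}$ almost everywhere, hence the elementary identity
\[
\int_{0}^{T}\breve{v}(t)\,w_{\mu}(t)\,dt=\int_{0}^{T}\bigl(w_{\mu}'(t)+\mu w_{\mu}(t)\bigr)w_{\mu}(t)\,dt=\tfrac{1}{2}w_{\mu}(T)^{2}+\mu\int_{0}^{T}w_{\mu}(t)^{2}\,dt\ge 0.
\]
Integrating against the positive measure $\mu^{-\alpha}\,d\mu$ on $(0,\infty)$ and applying Fubini concludes $\int_{0}^{T}\breve{v}\,I^{(\alpha)}\breve{v}\,dt\ge 0$, and the lemma follows after dividing by $\tau>0$.

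The only non-routine ingredient is recognizing the correct integral representation of $\mathcal{P}_{\alpha}^{n-\frac{1}{2}}$ that makes $\breve{v}$ appear on both sides of the inner product; once that is in place, the argument is essentially the classical positivity of the Riemann–Liouville fractional integral (so an alternative and even shorter route is simply to cite the well-known positive-definiteness of the Abel kernel on $L^{2}(0,T)$, e.g., as used in \cite{Mclean}). I would expect the Laplace-representation argument to be the cleanest self-contained path, since it bypasses any Fourier-transform manipulations on a bounded interval.
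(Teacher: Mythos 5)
Your proposal is correct. Note that the paper itself states Lemma \ref{PIpositive} without proof (it is essentially the positivity property of the averaged piecewise-constant quadrature from McLean--Mustapha \cite{Mclean}), so there is no in-paper argument to compare against; your write-up supplies a valid self-contained justification. The two steps both check out: first, since $\breve{v}\equiv v^{1}$ on $(t_0,t_1)$ and $\breve{v}\equiv v^{n-\frac12}$ on $(t_{n-1},t_n)$ for $n\ge 2$, the prefactors absorb exactly into the outer integral and the quadratic form equals $\tau^{-1}\int_0^T\breve{v}\,I^{(\alpha)}\breve{v}\,dt$; second, the Laplace (Bernstein) representation $\beta_{\alpha}(t)=\frac{\sin(\alpha\pi)}{\pi}\int_0^\infty\mu^{-\alpha}e^{-\mu t}\,d\mu$ is exact by the reflection formula, the exponential-kernel form is nonnegative via $w_\mu(0)=0$ and $\int_0^T(w_\mu'+\mu w_\mu)w_\mu\,dt=\tfrac12 w_\mu(T)^2+\mu\int_0^T w_\mu^2\,dt$, and Tonelli applies because $\breve{v}$ is bounded and $(t-s)^{\alpha-1}$ is locally integrable. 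This is the standard completely-monotone-kernel route to positive definiteness of the Riemann--Liouville integral and avoids Fourier arguments on a bounded interval; the only cosmetic point is that you could state explicitly that $w_\mu$ is absolutely continuous with $w_\mu(0)=0$ before integrating by parts.
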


\subsection{The spatial preliminaries}
Let spatial step sizes $h_{1}=(r_{1}-l_{1})/{M_{1}}$ and $h_{2}=(r_{2}-l_{2})/{M_{2}}$
for $M_{1}, M_{2}\in \mathbb{Z}^{+}$, we can define a uniform partition of $\Omega$ by
$x_{i}=l_{1}+ih_{1}$ for $0\le i\le M_{1}$ and $y_{j}=l_{2}+jh_{2}$ for $0\le j\le M_{2}$.
We assume that the partition is quasi-uniform, i.e., $Q_{1}h_{1}\leq h_{2} \leq Q_{2}h_{1}$
with $Q_{1}$ and $Q_{2}$ being positive constants independent of $h_{1}$, $h_{2}$ or $\tau$.
Besides, denote $\omega= \{(i,j)\:|\:1 \leq i \leq M_{1}-1, 1 \leq j \leq M_{2}-1\}$ and
$\bar{\omega}= \{(i,j)\:|\:0\leq i \leq M_{1}, 0 \leq j \leq M_{2}\}$ and
$\partial\omega=\bar{\omega}\setminus\omega$. Then, we define spatial grid function
spaces
    \begin{equation*}
        \mathfrak{U}=\{u \mid u=\{u_{i,j}\}, (i,j)\in \bar{\omega}\},\qquad
        \dot{\mathfrak{U}}=\{u\mid u\in\mathfrak{U}~\text{and}~u_{i,j}=0~\text{if}~(i,j)\in\partial\omega\}.
    \end{equation*}
Next, for any grid function $u\in \mathfrak{U}$, we introduce the following discrete operators
\begin{equation*}
\begin{split}
\delta_{x}u_{i+\frac{1}{2},j}=\frac{u_{i+1,j}-u_{i,j}}{h_{1}},\quad\delta_{x}^{2}u_{i,j}=\frac{u_{i+1,j}-2u_{i,j}+u_{i-1,j}}{h_{1}^{2}},\\
\delta_{y}u_{i,j+\frac{1}{2}}=\frac{u_{i,j+1}-u_{i,j}}{h_{2}},\quad\delta_{y}^{2}u_{i,j}=\frac{u_{i,j+1}-2u_{i,j}+u_{i,j-1}}{h_{2}^{2}},
\end{split}
\end{equation*}
\begin{equation*}
\mathcal{A}_{1}u_{i,j}=
\left\{\begin{array}{ccc}
\frac{u_{i-1,j}+10u_{i,j}+u_{i+1,j}}{12},& 1\leq i\leq M_{1}-1,&\quad0\leq j\leq M_{2},\\
u_{i,j},& \quad i=0,M_{1},&\quad0\leq j\leq M_{2},
\end{array}\right.
\end{equation*}
\begin{equation*}
\mathcal{A}_{2}u_{i,j}=
\left\{\begin{array}{ccc}
\frac{u_{i,j-1}+10u_{i,j}+u_{i,j+1}}{12},& 1\leq j\leq M_{2}-1,&\quad0\leq i\leq M_{1},\\
u_{i,j},& \quad j=0,M_{2},&\quad0\leq i\leq M_{1},
\end{array}\right.
\end{equation*}
and
\begin{equation*}
\Delta_{h}u_{i,j}=\delta_{x}^{2}u_{i,j}+\delta_{y}^{2}u_{i,j},\quad\mathcal{A}_{h}u_{i,j}=\mathcal{A}_{1}\mathcal{A}_{2}u_{i,j},\quad\Lambda_{h}u_{i,j}=\mathcal{A}_{2}\delta_{x}^{2}u_{i,j}+\mathcal{A}_{1}\delta_{y}^{2}u_{i,j}.
\end{equation*}

Furthermore, for any grid functions $u$ and $w$ in the grid function space $\dot{\mathfrak{U}}$,
invoke the discrete inner products and corresponding norms
\begin{equation*}
\begin{aligned}
&(u,w)=h_1h_2\sum_{i=1}^{M_{1}-1}\sum_{j=1}^{M_{2}-1}u_{i,j}w_{i,j},\quad \|u\|=\sqrt{(u,u)},\\
&(u,w)_{\mathcal{A}_{h}}=(u,\mathcal{A}_{h}w),\quad (u,w)_{\mathcal{A}_{1}}=(u,\mathcal{A}_{1}w),\quad (u,w)_{\mathcal{A}_{2}}=(u,\mathcal{A}_{2}w),\\
&\|u\|_{\mathcal{A}_{h}}=\sqrt{(u,\mathcal{A}_{h}u)},\quad~ \|u\|_{\mathcal{A}_{1}}=\sqrt{(u,\mathcal{A}_{1}u)},\quad \|u\|_{\mathcal{A}_{2}}=\sqrt{(u,\mathcal{A}_{2}u)},\\
&(\delta_{x}u,\delta_{x}w)=h_{1}h_{2}\sum_{i=0}^{M_{1}-1}\sum_{j=1}^{M_{2}-1}\delta_{x}u_{i+\frac{1}{2},j}\delta_{x}w_{i+\frac{1}{2},j},\quad\|\delta_{x}u\|=\sqrt{(\delta_{x}u,\delta_{x}u)},\\
&(\delta_{y}u,\delta_{y}w)=h_{1}h_{2}\sum_{i=1}^{M_{1}-1}\sum_{j=0}^{M_{2}-1}\delta_{y}u_{i,j+\frac{1}{2}}\delta_{y}w_{i,j+\frac{1}{2}},\quad\|\delta_{y}u\|=\sqrt{(\delta_{y}u,\delta_{y}u)}.
\end{aligned}
\end{equation*}
At last, we give some auxiliary lemmas \cite{duruilian} to facilitate the theoretical analysis.
\begin{lemma}
Let grid functions $u,w\in\dot{\mathfrak{U}}$, then it holds that
\begin{equation*}
(\delta_{x}^{2}u,w)=-(\delta_{x}u,\delta_{x}w),\quad (\delta_{y}^{2}u,w)=-(\delta_{y}u,\delta_{y}w),\quad (\delta_{x}^{2}\delta_{y}^{2}u,w)=(\delta_{x}\delta_{y}u,\delta_{x}\delta_{y}w).
\end{equation*}
\end{lemma}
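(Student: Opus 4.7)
The proof reduces to discrete summation by parts in each coordinate direction, and the plan is to handle the three identities in order, using the boundary vanishing of $u$ and $w$ to eliminate boundary terms.

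For the first identity, I would fix $j$ with $1\le j\le M_2-1$ and rewrite $\delta_x^2 u_{i,j}=\frac{1}{h_1}\bigl(\delta_x u_{i+1/2,j}-\delta_x u_{i-1/2,j}\bigr)$. Plugging this into $(\delta_x^2 u,w)$ and re-indexing the second of the two resulting sums by $k=i-1$ produces a telescoped expression
\begin{equation*}
h_1\sum_{i=1}^{M_1-1}(\delta_x^2 u)_{i,j}w_{i,j}=\sum_{i=0}^{M_1-1}\delta_x u_{i+1/2,j}(w_{i,j}-w_{i+1,j})+\bigl[\text{boundary terms at }i=0,M_1\bigr],
\end{equation*}
where the bracketed boundary contributions vanish because $w_{0,j}=w_{M_1,j}=0$ for $u,w\in\dot{\mathfrak{U}}$. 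Rewriting $w_{i,j}-w_{i+1,j}=-h_1\,\delta_x w_{i+1/2,j}$ and then summing against $h_2$ over $j=1,\dots,M_2-1$ recovers exactly $-(\delta_x u,\delta_x w)$. The second identity is obtained by the same argument with the roles of $x$ and $y$ interchanged.

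The third identity I would derive by composing the first two. First apply identity one to $(\delta_x^2(\delta_y^2 u),w)$; this is legitimate because $\delta_y^2 u$ vanishes at $i=0,M_1$ (since $u_{0,j}=u_{M_1,j}=0$ for all $j$), so the boundary bookkeeping in step one carries over. This yields $-(\delta_x\delta_y^2 u,\delta_x w)$. The operators $\delta_x$ and $\delta_y^2$ commute pointwise on the grid, so this equals $-(\delta_y^2\delta_x u,\delta_x w)$. Now apply identity two with $\phi=\delta_x u$ and test function $\psi=\delta_x w$; here $\delta_x w_{i+1/2,0}=h_1^{-1}(w_{i+1,0}-w_{i,0})=0$ and likewise at $j=M_2$, so the needed boundary vanishing is in place. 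The result is $-(-(\delta_x\delta_y u,\delta_x\delta_y w))=(\delta_x\delta_y u,\delta_x\delta_y w)$.

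The only subtle point, which is really the pedagogical heart of the lemma, is making sure that at each stage the object the summation-by-parts identity is being applied to lives in a space where the relevant boundary values are zero; for $\delta_y^2 u$ this follows from $u|_{x=l_1,r_1}=0$ pointwise in $y$, and for $\delta_x w$ this follows from $w|_{y=l_2,r_2}=0$ pointwise in $x$. No regularity or compatibility condition beyond membership in $\dot{\mathfrak{U}}$ is needed, and all manipulations are purely algebraic rearrangements of finite sums, so I do not anticipate any genuine obstacle.
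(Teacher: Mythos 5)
Your proposal is correct. Note, however, that the paper does not prove this lemma at all: it is quoted as a known auxiliary result from the reference [duruilian] (``At last, we give some auxiliary lemmas \cite{duruilian}...''), so there is no in-paper proof to compare against. Your argument is the standard discrete summation-by-parts proof that such references use, and it fills the gap soundly: the telescoping in the $x$-direction, the re-indexing, and the use of $w_{0,j}=w_{M_1,j}=0$ to absorb the boundary terms are all exactly right, as is the reduction of the third identity to two applications of the first two via commutation of $\delta_x$ and $\delta_y^2$.

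Two small remarks. First, your justification for applying identity one to the pair $(\delta_y^2 u, w)$ is slightly misplaced: what the $x$-direction summation by parts actually requires is only that the \emph{test function} $w$ vanish at $i=0,M_1$; no boundary condition on the first argument is used anywhere in the telescoping. Your observation that $\delta_y^2 u$ vanishes at $i=0,M_1$ is true (for $1\le j\le M_2-1$) and harmless, but it is needed only if one insists on formally invoking the lemma as stated for pairs in $\dot{\mathfrak{U}}$, rather than re-running the one-line computation. By contrast, your check that $\delta_x w$ vanishes at $j=0,M_2$ \emph{is} the genuinely necessary verification for the final $y$-direction step, and you identified it correctly. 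Second, the quantity $(\delta_x\delta_y u,\delta_x\delta_y w)$ is never defined in the paper's list of discrete inner products (only $(\delta_x u,\delta_x w)$ and $(\delta_y u,\delta_y w)$ are); a complete write-up should state the doubly staggered sum
\begin{equation*}
(\delta_x\delta_y u,\delta_x\delta_y w)=h_1h_2\sum_{i=0}^{M_1-1}\sum_{j=0}^{M_2-1}\delta_x\delta_y u_{i+\frac{1}{2},j+\frac{1}{2}}\,\delta_x\delta_y w_{i+\frac{1}{2},j+\frac{1}{2}},
\end{equation*}
which is what your computation produces. Neither point is a gap in the logic; both are bookkeeping clarifications.
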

\begin{lemma}\label{space2}
Assume that grid function $w\in\dot{\mathfrak{U}}$, then we yield
\begin{align*}
&\frac{2}{3}\|w\|^{2}\leq\|w\|_{\mathcal A_{1}}^{2}\leq\|w\|^{2},\quad   \frac{2}{3}\|w\|^{2}\leq\|w\|_{\mathcal A_{2}}^{2}\leq\|w\|^{2},\\
&\frac{1}{3}\|w\|^{2}\leq\|w\|_{\mathcal A_{h}}^{2}\leq\|w\|^{2},\quad   \|\mathcal{A}_{h}w\|\leq\|w\|.
\end{align*}
\end{lemma}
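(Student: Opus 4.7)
The unifying observation is the one-dimensional identity
\begin{equation*}
\mathcal{A}_{1}=I+\tfrac{h_{1}^{2}}{12}\delta_{x}^{2},\qquad \mathcal{A}_{2}=I+\tfrac{h_{2}^{2}}{12}\delta_{y}^{2}
\end{equation*}
on interior grid points, which is just a rearrangement of the defining stencils. Combined with the summation-by-parts identities of the previous lemma, this reduces every bound in the statement to the inverse estimate $\|\delta_{x}w\|^{2}\le 4h_{1}^{-2}\|w\|^{2}$ (and its $y$-analogue), which I would obtain pointwise from $|\delta_{x}w_{i+\frac{1}{2},j}|^{2}\le 2h_{1}^{-2}(w_{i+1,j}^{2}+w_{i,j}^{2})$ followed by summation. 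For $\mathcal{A}_{1}$ I then compute
\begin{equation*}
\|w\|_{\mathcal{A}_{1}}^{2}=(w,w)+\tfrac{h_{1}^{2}}{12}(\delta_{x}^{2}w,w)=\|w\|^{2}-\tfrac{h_{1}^{2}}{12}\|\delta_{x}w\|^{2},
\end{equation*}
which yields the upper bound immediately and, via the inverse estimate, the lower bound $\|w\|_{\mathcal{A}_{1}}^{2}\ge (1-\tfrac{1}{3})\|w\|^{2}=\tfrac{2}{3}\|w\|^{2}$. The argument for $\mathcal{A}_{2}$ is identical with the roles of $x$ and $y$ swapped.

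For $\mathcal{A}_{h}=\mathcal{A}_{1}\mathcal{A}_{2}$, the key structural point is that $\mathcal{A}_{1}$ and $\mathcal{A}_{2}$ act along independent directions and are self-adjoint and positive definite on $\dot{\mathfrak{U}}$, so they commute and share a joint orthogonal eigenbasis. Each factor has spectrum in $[\tfrac{2}{3},1]$ by the first step, hence $\mathcal{A}_{h}$ is self-adjoint with spectrum in $[\tfrac{4}{9},1]\subset[\tfrac{1}{3},1]$. This at once gives $\tfrac{1}{3}\|w\|^{2}\le \|w\|_{\mathcal{A}_{h}}^{2}\le \|w\|^{2}$, and from $0\le \mathcal{A}_{h}\le I$ one concludes $\mathcal{A}_{h}^{2}\le \mathcal{A}_{h}\le I$, so $\|\mathcal{A}_{h}w\|^{2}=(\mathcal{A}_{h}^{2}w,w)\le \|w\|^{2}$. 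If one prefers to avoid spectral language, the same conclusions follow by substituting $v=\mathcal{A}_{2}^{1/2}w$ into the already proved $\mathcal{A}_{1}$-bound to obtain $\tfrac{2}{3}(\mathcal{A}_{2}w,w)\le (\mathcal{A}_{h}w,w)\le (\mathcal{A}_{2}w,w)$, and then applying the $\mathcal{A}_{2}$-bound to the outer factor.

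No step is a genuine obstacle; the only point worth care is checking that the commutativity and self-adjointness of $\mathcal{A}_{1},\mathcal{A}_{2}$ are compatible with the zero-boundary space $\dot{\mathfrak{U}}$. Since both operators fix boundary values and their interior stencils are symmetric, this is a direct consequence of the summation-by-parts identities already stated, so the verification is routine.
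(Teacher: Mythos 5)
Your proof is correct. Note that the paper itself gives no proof of this lemma — it is quoted from the reference [duruilian] — so the comparison is with the standard argument there. Your treatment of $\mathcal{A}_{1}$ and $\mathcal{A}_{2}$ is exactly that argument: rewrite $\mathcal{A}_{1}=I+\tfrac{h_{1}^{2}}{12}\delta_{x}^{2}$, apply summation by parts to get $\|w\|_{\mathcal{A}_{1}}^{2}=\|w\|^{2}-\tfrac{h_{1}^{2}}{12}\|\delta_{x}w\|^{2}$, and invoke the inverse estimate $\|\delta_{x}w\|^{2}\le 4h_{1}^{-2}\|w\|^{2}$. Where you diverge is the product step: the usual route expands $\mathcal{A}_{h}=I+\tfrac{h_{1}^{2}}{12}\delta_{x}^{2}+\tfrac{h_{2}^{2}}{12}\delta_{y}^{2}+\tfrac{h_{1}^{2}h_{2}^{2}}{144}\delta_{x}^{2}\delta_{y}^{2}$, so that $\|w\|_{\mathcal{A}_{h}}^{2}=\|w\|^{2}-\tfrac{h_{1}^{2}}{12}\|\delta_{x}w\|^{2}-\tfrac{h_{2}^{2}}{12}\|\delta_{y}w\|^{2}+\tfrac{h_{1}^{2}h_{2}^{2}}{144}\|\delta_{x}\delta_{y}w\|^{2}$; dropping the nonnegative cross term and applying the two inverse estimates gives exactly the stated constant $\tfrac{1}{3}$, while the upper bound needs one extra inverse estimate to absorb the cross term. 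Your spectral argument (commuting self-adjoint factors with spectra in $[\tfrac{2}{3},1]$, hence $\sigma(\mathcal{A}_{h})\subset[\tfrac{4}{9},1]$) is also valid on $\dot{\mathfrak{U}}$ — the commutativity and invariance of $\dot{\mathfrak{U}}$ under both operators do check out — and in fact yields the sharper lower constant $\tfrac{4}{9}>\tfrac{1}{3}$, as well as a cleaner derivation of $\|\mathcal{A}_{h}w\|\le\|w\|$ than the elementary triangle-inequality route. Either version is acceptable; yours buys sharper constants at the price of invoking joint diagonalization, which is slightly heavier machinery than the expand-and-estimate computation the cited reference uses.
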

\begin{lemma}\label{space3}
Suppose the function $G(x,y)\in C_{x,y}^{6,6}([l_{1},r_{1}]\times[l_{2},r_{2}])$, we have
\begin{align*}
&\frac{1}{12}\left[\partial_{x}^{2}G(x_{i-1})+10\partial_{x}^{2}G(x_{i})+\partial_{x}^{2}G(x_{i+1})\right]-\frac{1}{h_{1}^{2}}\left[G(x_{i-1})-2G(x_{i})+G(x_{i+1})\right]\\
=&\frac{h_{1}^{4}}{240}\partial_{x}^{6}G(\theta_{i}),\quad \theta_i\in(x_{i-1},x_{i+1}),\quad 1\le i\le M_{1}-1,\\
&\frac{1}{12}\left[\partial_{y}^{2}G(y_{j-1})+10\partial_{y}^{2}G(y_{j})+\partial_{y}^{2}G(y_{j+1})\right]-\frac{1}{h_{2}^{2}}\left[G(y_{j-1})-2G(y_{j})+G(y_{j+1})\right]\\
=&\frac{h_{2}^{4}}{240}\partial_{y}^{6}G(\theta_{j}),\quad \theta_j\in(y_{j-1},y_{j+1}),\quad 1\le j\le M_{2}-1.
\end{align*}
\end{lemma}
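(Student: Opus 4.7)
The plan is to establish both identities by symmetric Taylor expansion with integral remainders about the central nodes $x_i$ and $y_j$; I will write out only the $x$-direction since the $y$-direction is identical by relabeling. The sixfold differentiability of $G$ in $x$ justifies expanding both $G(x_{i\pm1})$ and $\partial_x^2 G(x_{i\pm1})$ about $x_i$, with enough terms kept so that the remainders involve $\partial_x^6 G$.

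First I would write, using the Taylor formula with integral remainder,
\begin{equation*}
G(x_{i\pm 1})=\sum_{k=0}^{5}\frac{(\pm h_1)^{k}}{k!}\partial_x^{k}G(x_i)+\frac{1}{5!}\int_{x_i}^{x_{i\pm1}}(x_{i\pm1}-s)^{5}\partial_x^{6}G(s)\,ds,
\end{equation*}
and, analogously,
\begin{equation*}
\partial_x^{2}G(x_{i\pm 1})=\sum_{k=0}^{3}\frac{(\pm h_1)^{k}}{k!}\partial_x^{k+2}G(x_i)+\frac{1}{3!}\int_{x_i}^{x_{i\pm1}}(x_{i\pm1}-s)^{3}\partial_x^{6}G(s)\,ds.
\end{equation*}
Substituting these into the two averaged quantities and exploiting the cancellation of odd-order terms, I expect to find
\begin{equation*}
\frac{G(x_{i-1})-2G(x_i)+G(x_{i+1})}{h_1^{2}}=\partial_x^{2}G(x_i)+\tfrac{h_1^{2}}{12}\partial_x^{4}G(x_i)+\tfrac{h_1^{4}}{360}\partial_x^{6}G(x_i)+R_1,
\end{equation*}
\begin{equation*}
\tfrac{1}{12}\bigl[\partial_x^{2}G(x_{i-1})+10\partial_x^{2}G(x_i)+\partial_x^{2}G(x_{i+1})\bigr]=\partial_x^{2}G(x_i)+\tfrac{h_1^{2}}{12}\partial_x^{4}G(x_i)+\tfrac{h_1^{4}}{144}\partial_x^{6}G(x_i)+R_2,
\end{equation*}
where $R_1,R_2$ are the integral-remainder pieces. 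Subtracting, the $\partial_x^{2}G$ and $\partial_x^{4}G$ contributions cancel exactly, and the $\partial_x^{6}G$ coefficients combine as $\tfrac{1}{144}-\tfrac{1}{360}=\tfrac{216}{144\cdot 360}=\tfrac{1}{240}$, which is precisely the constant appearing on the right-hand side of the claim.

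The main technical obstacle is turning the lumped remainder into the clean point-value form $\tfrac{h_1^{4}}{240}\partial_x^{6}G(\theta_i)$. To do this I would rearrange all remainder contributions, together with the leading $h_1^4$ coefficient, into a single expression of the form $\int_{x_{i-1}}^{x_{i+1}}K(s)\partial_x^{6}G(s)\,ds$ with a Peano-type weight $K$ that has constant sign on $(x_{i-1},x_{i+1})$ and whose total integral equals $h_1^{4}/240$. Once sign-definiteness and the value of $\int K$ are checked by direct computation on the pieces $[x_{i-1},x_i]$ and $[x_i,x_{i+1}]$, the generalized integral mean value theorem, together with the continuity of $\partial_x^{6}G$, yields some $\theta_i\in(x_{i-1},x_{i+1})$ realizing the desired representation. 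The $y$-direction identity then follows verbatim, replacing $x$, $h_1$, $M_1$ by $y$, $h_2$, $M_2$.
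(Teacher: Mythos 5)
Your proposal is correct. Note that the paper itself does not prove Lemma \ref{space3}; it is simply quoted from the reference [Du--Sun--Wang, SIAM J. Numer. Anal. 60 (2022)], so there is no in-paper argument to compare against. Your Taylor-coefficient bookkeeping checks out: the second-difference quotient carries $\tfrac{h_1^4}{360}\partial_x^6G$, the compact average carries $\tfrac{h_1^4}{144}\partial_x^6G$, and $\tfrac{1}{144}-\tfrac{1}{360}=\tfrac{1}{240}$. The one step you defer --- sign-definiteness of the Peano kernel --- does hold and is worth recording explicitly, since it is the only nontrivial point. Writing $E(G)$ for the left-hand side with $x_i=0$, the Peano kernel theorem gives $E(G)=\int_{-h_1}^{h_1}K(s)\,\partial_x^6G(s)\,ds$ with $K(s)=K(-s)$ and, for $s\in[0,h_1]$ and $u=h_1-s$,
\begin{equation*}
K(s)=\frac{1}{120}\left(\frac{5u^{3}}{3}-\frac{u^{5}}{h_1^{2}}\right)=\frac{u^{3}}{120}\left(\frac{5}{3}-\frac{u^{2}}{h_1^{2}}\right)\ge 0,
\qquad
\int_{-h_1}^{h_1}K(s)\,ds=2\cdot\frac{h_1^4}{480}=\frac{h_1^{4}}{240},
\end{equation*}
so the integral mean value theorem produces the claimed $\theta_i\in(x_{i-1},x_{i+1})$. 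With that computation inserted, your argument is a complete and self-contained proof of the cited lemma.
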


\section{Model reconstruction}\label{sec2.5}

For the purpose of numerical analysis, we transform the original model here. We first introduce generalized identity function $g(t)$ \cite{Zhe} as
\begin{align}
(\beta_{\alpha_0-1}*k)(t)
&=\int_0^1\frac{(tz)^{\alpha(0)-\alpha(tz)}}{\Gamma(\alpha(0)-1)\Gamma(2-\alpha(tz))}(1-z)^{\alpha(0)-2}z^{1-\alpha(0)}dz=:g(t),\label{mh7}
\end{align}
with $\alpha_0=\alpha(0)$. It is shown in \cite{Zhe} that $g(0)=1$, $|g|\leq Q$ for $t\in [0,T]$ and
\begin{align}\nonumber
|g'|\leq Q(|\ln t|+1),~~|g^{(m)}|\leq Qt^{-(m-1)},~m=2,3 \;\text{ for }\; t\in (0,T].
\end{align}
Then, following the idea of \cite{Qiu1}, we can transform \eqref{VtFDEs} into the form
\begin{equation}\label{Model2}
\p_t u -\beta_{\alpha_0-1}*  \Delta u=p_u+g(t)\bar u_0,~~p_u:=\beta_{\alpha_0-1}*f-g'*\p_tu,
\end{equation}
equipped with the initial and boundary conditions (\ref{ibc}). From \cite{Qiu1},
we know that the models \eqref{Model2}-\eqref{ibc} and \eqref{VtFDEs}-\eqref{ibc}
are indeed equivalent, which indicates that attention to \eqref{VtFDEs} can be
diverted to \eqref{Model2}. Next, we shall consider the numerical approximations of \eqref{Model2}. Denoting $ (0,1) \ni \bar{\alpha} = \alpha_0 -1$ and $\tilde{u}(t)=u(t)-u_0$, then \eqref{Model2} can be further transformed into
\begin{align}
  & \partial_t\tilde{u} + \int_0^t g'(t-s) \p_s \tilde{u}(s)ds - \int_0^t \beta_{\bar{\alpha}}(t-s) \Delta \tilde{u}(s)ds = \mathcal{F}(t), \label{ModelB} \\
  & \tilde{u}(0) =0, \quad \tilde{u}_t(0) = \bar{u}_0, \label{ModelC}
\end{align}
where $\mathcal{F}(t)= \left( \beta_{\bar{\alpha}}*(\Delta u_0 + f) \right) (t) + g(t)\bar{u}_0$.
For the establishment and analysis of the numerical schemes to this problem, we now impose an additional constraint on $\alpha(t)$ to improve the properties of the generalized identity function $g$.

\vskip 1mm
\begin{lemma}\label{lemma3.1}
  Let $g(t)$ be denoted by \eqref{mh7}. If $\alpha'(0)=0$, then $|g'(t)|\leq Q$ for $t\in [0,T]$ and $|g''(t)|\leq Q(|\ln t|+1)$ for $t\in (0,T]$.
\end{lemma}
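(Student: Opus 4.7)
The plan is to differentiate \eqref{mh7} under the integral sign after substituting $s=tz$, and to exploit $\alpha'(0)=0$ to tame the would-be singular pieces. Writing
$$g(t)=\frac{1}{\Gamma(\alpha_0-1)}\int_0^1 (1-z)^{\alpha_0-2}\,F(t,z)\,dz,\qquad F(t,z):=\frac{(tz)^{\alpha_0-\alpha(tz)}\,z^{1-\alpha_0}}{\Gamma(2-\alpha(tz))},$$
the goal reduces to establishing $|\partial_t F(t,z)|\le Q\,z^{1-\alpha_0}$ and $|\partial_t^2 F(t,z)|\le Q(|\ln t|+|\ln z|+1)\,z^{1-\alpha_0}$. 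Both $z^{1-\alpha_0}(1-z)^{\alpha_0-2}$ and $z^{1-\alpha_0}|\ln z|(1-z)^{\alpha_0-2}$ are integrable on $(0,1)$ as Beta-type kernels, so dominated convergence then delivers the claimed bounds on $g'$ and $g''$.

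The analytic input is that $\alpha\in C^3$ with $\alpha'(0)=0$ gives, via Taylor expansion at the origin, $|\alpha(s)-\alpha_0|\le Q s^2$ and $|\alpha'(s)|\le Q s$ on $[0,T]$. Hence $|(tz)^{\alpha_0-\alpha(tz)}|=\exp((\alpha_0-\alpha(tz))\ln(tz))$ remains bounded because $(tz)^2|\ln(tz)|$ is bounded on $(0,T]$, so $|F|\le Qz^{1-\alpha_0}$. Computing logarithmically,
$$\partial_t\ln F=-z\alpha'(tz)\ln(tz)+\frac{\alpha_0-\alpha(tz)}{t}+z\alpha'(tz)\,\psi(2-\alpha(tz)),\qquad\psi:=\Gamma'/\Gamma,$$
each piece is uniformly bounded on $(0,T]\times(0,1]$: $|z\alpha'(tz)\ln(tz)|\le Qz^2\cdot(tz)|\ln(tz)|\le Q$ since $u|\ln u|$ is bounded on $(0,T]$; $|(\alpha_0-\alpha(tz))/t|\le Qtz^2\le Q$; and $\psi(2-\alpha(tz))$ is bounded because $\alpha(tz)$ stays strictly between $1$ and $2$. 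Thus $|\partial_t F|=|F|\,|\partial_t\ln F|\le Qz^{1-\alpha_0}$, yielding $|g'(t)|\le Q$ on $[0,T]$.

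For $g''$, a second differentiation produces
$$\partial_t^2\ln F=-z^2\alpha''(tz)\ln(tz)-\frac{2z\alpha'(tz)}{t}-\frac{\alpha_0-\alpha(tz)}{t^2}+z^2\alpha''(tz)\psi(2-\alpha(tz))-z^2(\alpha'(tz))^2\psi'(2-\alpha(tz)).$$
Under $\alpha'(0)=0$, the potentially singular middle two terms are $O(z^2)$ via the Taylor bounds, and only $-z^2\alpha''(tz)\ln(tz)$ retains unboundedness, of order $|\ln t|+|\ln z|$; the remaining digamma/polygamma pieces are bounded. Combined with $(\partial_t\ln F)^2\le Q$, this gives $|\partial_t^2 F|\le |F|\bigl((\partial_t\ln F)^2+|\partial_t^2\ln F|\bigr)\le Q(|\ln t|+|\ln z|+1)z^{1-\alpha_0}$. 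Integrating against $(1-z)^{\alpha_0-2}/\Gamma(\alpha_0-1)$ absorbs $|\ln z|$ into a finite constant while $|\ln t|$ factors out, producing $|g''(t)|\le Q(|\ln t|+1)$ on $(0,T]$.

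The main technical obstacle is the bookkeeping: isolating the terms that depend singularly on $t$ and verifying that $\alpha'(0)=0$ exactly cancels the would-be $\ln t$-type leading contribution $z\alpha'(tz)\ln(tz)$ in $\partial_t\ln F$ (improving $g'$ from the $O(|\ln t|+1)$ bound of the unconstrained case to $O(1)$) and the $1/t^2$ contribution in $\partial_t^2\ln F$ (so that $g''$ grows only logarithmically). This pinpoints precisely why the additional hypothesis $\alpha'(0)=0$ is imposed here, relative to \cite{Qiu1}.
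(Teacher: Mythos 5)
Your proof is correct and follows essentially the same route as the paper's: both exploit $\alpha'(0)=0$ via Taylor expansion to get $|\alpha(\bar p)-\alpha_0|\le Q\bar p^{\,2}$ and $|\alpha'(\bar p)|\le Q\bar p$ with $\bar p=tz$, differentiate the power factor logarithmically (your terms $-z\alpha'(tz)\ln(tz)+(\alpha_0-\alpha(tz))/t$ are exactly the paper's $-z[\ln\bar p\int_0^{\bar p}\alpha''+\bar p^{-1}\int_0^{\bar p}(\bar p-s)\alpha'']$ written with the remainder integrals evaluated), bound everything by the boundedness of $u|\ln u|$, and absorb $|\ln z|$ into the integrable Beta kernel $(1-z)^{\alpha_0-2}z^{1-\alpha_0}$. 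The only differences are cosmetic: you track the $1/\Gamma(2-\alpha(tz))$ factor explicitly through digamma terms where the paper leaves this implicit, and there is a harmless typo ($Qz^2$ should be $Qz$) in your bound for $|z\alpha'(tz)\ln(tz)|$.
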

\begin{proof}
To bound derivatives of $g$, we denote $\mathcal{H}(t)=(tz)^{\alpha(0)-\alpha(tz)}$. Under the condition $\alpha'(0)=0$, the Taylor expansion gives $\mathcal{H}(t)=e^{-\int_0^{tz}(tz-s)\alpha^{\prime\prime}(s)ds\ln(tz)}$. With the notation $\bar{p}=tz$, we have
\begin{equation*}
\begin{aligned}
|\mathcal{H}^{\prime}(t)|=&\left|\mathcal{H}(t)\partial_{\bar{p}}\left(-\int_{0}^{\bar{p}}(\bar{p}-s)\alpha^{\prime\prime}(s)ds\ln(\bar{p})\right)\frac{d\bar{p}}{dt}\right|\\
=&\left|-\mathcal{H}(t)z\left[\ln(\bar{p})\int_{0}^{\bar{p}}\alpha^{\prime\prime}(s)ds+\frac{1}{\bar{p}}\int_{0}^{\bar{p}}(\bar{p}-s)\alpha^{\prime\prime}(s)ds\right]\right|\leq Q(|\bar{p}\ln\bar{p}|+\bar{p})\leq Q,\\
|\mathcal{H}''(t)|=&\bigg|\mathcal{H}(t) z^2\left[\ln(\bar{p})\int_{0}^{\bar{p}}\alpha^{\prime\prime}(s)ds+\frac{1}{\bar{p}}\int_{0}^{\bar{p}}(\bar{p}-s)\alpha^{\prime\prime}(s)ds\right]^2\\
&-\mathcal{H}(t)z\left[\frac{2z}{\bar{p}}\int_{0}^{\bar{p}}\alpha^{\prime\prime}(s)ds+\ln(\bar{p})\alpha^{\prime\prime}(p)z-\frac{z}{\bar{p}^2}\int_{0}^{\bar{p}}(\bar{p}-s)\alpha^{\prime\prime}(s)ds\right]\bigg|\\
\le & Q(|\bar{p}\ln\bar{p}|+\bar{p})^2+Q(1+\ln(\bar{p}))\le Q(1+\ln(\bar{p})),
\end{aligned}
\end{equation*}
in which, we utilized the fact $|\mathcal{H}(t)|\le Q$. Then, we apply them to bound $g'(t)$ and $g''(t)$ as
\begin{equation*}
\begin{aligned}
|g'|=&\left|\int_0^1\partial_t\left(\frac{(tz)^{\alpha(0)-\alpha(tz)}}{\Gamma(\alpha_{0}-1)\Gamma(2-\alpha(tz))}\right)(1-z)^{\alpha_{0}-2}z^{1-\alpha_{0}}dz\right|
\le  Q\int_0^1 (1-z)^{\alpha_{0}-2}z^{1-\alpha_{0}}dz\le Q,\\
|g''|=&\left|\int_0^1\partial_t^2\left(\frac{(tz)^{\alpha(0)-\alpha(tz)}}{\Gamma(\alpha_{0}-1)\Gamma(2-\alpha(tz))}\right)(1-z)^{\alpha_{0}-2}z^{1-\alpha_{0}}dz\right|\\
\le & Q\int_{0}^{1}\left(|\ln(tz)|+1\right)(1-z)^{\alpha_{0}-2}z^{1-\alpha_{0}}dz\\
\le & Q\left(|\ln t|+1\right)\int_{0}^{1}\frac{|\ln t|+|\ln z|+1}{|\ln t|+1}(1-z)^{\alpha_{0}-2}z^{1-\alpha_{0}}dz\le Q(|\ln t|+1).
\end{aligned}
\end{equation*}
Thus, we complete the proof of this lemma.
\end{proof}

\begin{remark}
    Notably, the above lemma removes the restriction $\alpha''(0)=0$ in \cite[Lemma 3.1]{Qiu1}, allowing for a more general setting. Although $g''(t)$ exhibits a weak singularity in this case, suitable treatment techniques enable rigorous stability and convergence analysis, yielding the desired results. This relaxation of conditions thus extends the applicability of the proposed methods.
\end{remark}

Furthermore, in line with \cite{duruilian,Qiu1}, we introduce the following reasonable assumption on the regularity of $u$, which is essential for establishing the subsequent main results.

\vskip 1mm
\begin{assumption}\label{regularity}
For all $(\bm{x},t)\in\bar{\Omega}\times(0,T]$, \eqref{VtFDEs}-\eqref{ibc} has a unique solution $u$, and there exists a positive constant $Q$ such that
\begin{align*}
&\left|\partial_{z}^{6}u(x,y,t)\right|\leq Q\quad for\;\:z=x,y, \quad  \left|\partial_{x}^{2}\partial_{y}^{2} \partial_{t}u(x,y,t)\right|\leq Q, \quad t\in [0,T], \\
&\left|\partial_{t}^{2}\Delta u(x,y,t)\right|\leq Qt^{\alpha_{0}-2}, \quad \left|\partial_{t}^{3}u(x,y,t)\right|\leq Qt^{\alpha_{0}-3}\quad   for \;\; \alpha_0 \in (1,2), \quad  t\in (0,T].
\end{align*}
\end{assumption}

\section{Formally second-order ADI compact difference scheme}\label{sec3}

In what follows, we proceed to establish and analyze an ADI compact difference scheme with formally second-order accuracy in time and fourth-order accuracy in space.

\subsection{Temporal semi-discretization}
Firstly, we discretize \eqref{ModelB}-\eqref{ModelC} in the temporal direction. Here, we consider \eqref{ModelB} at the time level $t=t_n$,
\begin{align}\label{eq4.1}
  & \partial_t\tilde{u}(t_n) + \int_0^{t_n} g'(t_n-s) \p_{s} \tilde{u}(s) ds - \int_0^{t_n} \beta_{\bar{\alpha}}(t_n-s) \Delta \tilde{u}(s)ds = \mathcal{F}(t_n),
\end{align}
for $1\leq n \leq N$. Following the idea of Crank-Nicolson methods, we have
\begin{equation}\label{eq4.2}
\begin{aligned}
  &\frac{1}{2}\left(\partial_t\tilde{u}(t_n)+\partial_t\tilde{u}(t_n-1)\right)\\
  &+\frac{1}{2}\left(\int_0^{t_n} g'(t_n-s) \p_{s} \tilde{u}(s) ds+\int_0^{t_{n-1}} g'(t_{n-1}-s) \p_{s} \tilde{u}(s) ds\right)\\
  &-\frac{1}{2}\left(\int_0^{t_n} \beta_{\bar{\alpha}}(t_n-s) \Delta \tilde{u}(s)ds+\int_0^{t_{n-1}} \beta_{\bar{\alpha}}(t_{n-1}-s) \Delta \tilde{u}(s)ds\right)\\
  =&\frac{1}{2}\left( \mathcal{F}(t_n)+ \mathcal{F}(t_{n-1})\right),\qquad n\ge 1.
\end{aligned}
\end{equation}

Next, we shall discuss the discretization of the terms from the left-hand side of
\eqref{eq4.2}. For the first term, with the notation $\tilde{u}^n:=\tilde{u}(t_n)$, we utilize the Taylor expansion formula to obtain
\begin{equation}\label{e4.3}
\delta_t \tilde{u}^{n-\frac{1}{2}}=\frac{1}{2}\left(\partial_t\tilde{u}(t_n)+\partial_t\tilde{u}(t_n-1)\right)+(R_1)^{n-\frac{1}{2}},  \quad 1\leq n \leq N,
\end{equation}
where
\begin{equation}\nonumber
\begin{aligned}
(R_{1})^{\frac{1}{2}}= & \frac{1}{2\tau}\int_0^{t_1}(t_1-t)\p_{t}^2\tilde{u}(t)dt-\frac{1}{2\tau}\int_0^{t_1}t\p_{t}^2\tilde{u}(t)dt,\\
(R_1)^{n-\frac{1}{2}} = &\frac{1}{\tau}\int_{t_{n-\frac{1}{2}}}^{t_{n}}\frac{(t_{n}-t)^{2}}{2}\p_{t}^3 \tilde{u}(t)dt+\frac{1}{\tau}\int_{t_{n-1}}^{t_{n-\frac{1}{2}}}\frac{(t_{n-1}-t)^{2}}{2}\p_{t}^3 \tilde{u}(t)dt \\
&+ \frac{1}{2}\int_{t_{n-\frac{1}{2}}}^{t_{n}}(t-t_{n})\p_{t}^3 \tilde{u}(t)dt+\frac{1}{2}\int_{t_{n-1}}^{t_{n-\frac{1}{2}}}(t_{n-1}-t)\p_{t}^3 \tilde{u}(t)dt,\quad n\ge 2.
\end{aligned}
\end{equation}
For the second term of \eqref{eq4.1}, we apply the piecewise linear
interpolating to yield
\begin{align}\nonumber
  & \int_0^{t_n} g'(t_n-s) \p_{s} \tilde{u}(s) ds = \sum\limits_{k=1}^{n} w_{n-k} \delta_{t}\tilde{u}^{k-\frac{1}{2}} + (R_2)^n, \quad 1\leq n \leq N,
\end{align}
where $w_k = \int_{t_k}^{t_{k+1}}g'(t)dt=g(t_{k+1})-g(t_{k})$, and from \cite{Qiu1},
we can get
\begin{equation}\nonumber
\begin{split}
(R_2)^n = \sum\limits_{k=1}^{n}    \int_{t_{k-1}}^{t_k} \Biggl[  &\frac{t_{k-1}-s}{\tau} \int_{s}^{t_k} g''(t_n-\theta)(t_k-\theta)d\theta
+ \frac{s-t_k}{\tau} \int_{s}^{t_{k-1}} g''(t_n-\theta)(t_{k-1}-\theta)d\theta   \Biggl] \p_{s}^2 \tilde{u}(s)ds.
\end{split}
\end{equation}
Denote
\begin{equation*}
\tilde{w}_{0}=\frac{w_{0}}{2},\qquad \tilde{w}_{k}=\frac{1}{2}\left(w_{k}+w_{k-1}\right),\quad 1\le k\le n-1.
\end{equation*}
Thus, for the second term of \eqref{eq4.2}, we have
\begin{equation}\label{eq4.4}
\sum\limits_{k=1}^{n} \tilde{w}_{n-k} \delta_{t}\tilde{u}^{k-\frac{1}{2}}
=\frac{1}{2}\left(\int_0^{t_n} g'(t_n-s) \p_{s} \tilde{u}(s) ds+\int_0^{t_{n-1}} g'(t_{n-1}-s) \p_{s} \tilde{u}(s) ds\right)-(R_2)^{n-\frac{1}{2}}.
\end{equation}
Finally, for the third term of \eqref{eq4.2}, we employ the trapezoidal convolution
quadrature rule to get
\begin{equation}\label{eq4.5}
\frac{1}{2}\left(\int_0^{t_n} \beta_{\bar{\alpha}}(t_n-s) \Delta \tilde{u}(s)ds+\int_0^{t_{n-1}} \beta_{\bar{\alpha}}(t_{n-1}-s) \Delta \tilde{u}(s)ds\right)
=\tau^{\bar{\alpha}} \left( \sum\limits_{p=0}^n \chi_{p}^{(\bar{\alpha})} \Delta \tilde{u}^{n-p-\frac{1}{2}} +  \hat{\rho}_n^{(\bar{\alpha})} \Delta \tilde{u}^0 \right) + (R_3)^{n-\frac{1}{2}},
\end{equation}
with the notations $\tilde{u}^{-1}=0$ and
$\hat{\rho}_{n}^{(\bar{\alpha})}=\frac{1}{2}\left(\rho_{n}^{(\bar{\alpha})}+\rho_{n-1}^{(\bar{\alpha})}\right)$
for $1\leq n \leq N$, and
\begin{equation*}
  (R_{3})^{n-\frac{1}{2}}=\frac{1}{2}\left[\left(I^{(\bar{\alpha})}\Delta\tilde{u}(t_{n})-\mathcal{Q}_{n}^{(\bar{\alpha})}\Delta\tilde{u}\right)+\left(I^{(\bar{\alpha})}\Delta\tilde{u}(t_{n-1})-\mathcal{Q}_{n-1}^{(\bar{\alpha})}\Delta\tilde{u}\right)\right].
\end{equation*}
Next, taking \eqref{e4.3}, \eqref{eq4.4} and \eqref{eq4.5} into \eqref{eq4.2},
we have
\begin{equation}\label{e4.6}
\begin{aligned}
\delta_t \tilde{u}^{n-\frac{1}{2}} + \sum\limits_{k=1}^{n} \tilde{w}_{n-k} \delta_t \tilde{u}^{k-\frac{1}{2}} - \tau^{\bar{\alpha}} \left( \sum\limits_{p=0}^n \chi_{p}^{(\bar{\alpha})} \Delta \tilde{u}^{n-p-\frac{1}{2}} +  \hat{\rho}_n^{(\bar{\alpha})} \Delta \tilde{u}^0 \right)
= \frac{1}{2}\left(\mathcal{F}(t_n)+\mathcal{F}(t_{n-1})\right)+R^{n-\frac{1}{2}},
\end{aligned}
\end{equation}
with $1 \leq n \leq N$, where $R^n = (R_1)^n-(R_2)^n+(R_3)^n$.

Then, based on this time discretization, we establish a fully discrete scheme.

\subsection{Fully discrete scheme}
Let $\tilde{u}_{i,j}^{n}:=\tilde{u}(x_{i},y_{j},t_{n})$,
$\mathcal{F}_{i,j}^{n}:=\mathcal{F}(x_{i},y_{j},t_{n})$ for $(i,j)\in \omega$ and
$0\le n\le N$. Based on Lemma \ref{space3} and Assumption \ref{regularity}, we apply the compact operator $\mathcal{A}_{h}$ on both sides of \eqref{e4.6} evaluated at $(x_{i},y_{j})$ to reformulate this formula as follows
\begin{equation}\label{e4.7}
\begin{aligned}
&\delta_t\mathcal{A}_{h}\tilde{u}_{i,j}^{n-\frac{1}{2}} + \sum\limits_{k=1}^{n} \tilde{w}_{n-k} \delta_t\mathcal{A}_{h}\tilde{u}_{i,j}^{k-\frac{1}{2}} - \tau^{\bar{\alpha}} \left( \sum\limits_{p=0}^n \chi_{p}^{(\bar{\alpha})} \Lambda_{h} \tilde{u}_{i,j}^{n-p-\frac{1}{2}} +  \hat{\rho}_n^{(\bar{\alpha})} \Lambda_{h} \tilde{u}_{i,j}^0 \right) \\
= &\mathcal{A}_{h}\mathcal{F}_{i,j}^{n-\frac{1}{2}}+\mathcal{A}_{h}R^{n-\frac{1}{2}}_{i,j}+\mathcal{O}(h_{1}^{4}+h_{2}^{4}),
\end{aligned}
\end{equation}
for $(i,j)\in \omega$ and $1\le n\le N$. At this point, with the help of the Taylor series
expansion with integral remainder, it is not difficult to find
\begin{equation*}
0=\delta_{x}^{2}\delta_{y}^{2}\delta_{t}\tilde{u}_{i,j}^{n-\frac{1}{2}}-(\hat{R}_{4})_{i,j}^{n-\frac{1}{2}}, \quad (i,j)\in \omega,\quad 1\le n\le N,
\end{equation*}
where
\begin{equation*}
  (\hat{R}_{4})_{i,j}^{n-\frac{1}{2}}=\frac{1}{2}\int_{0}^{1}\delta_{x}^{2}\delta_{y}^{2}\biggl[\partial_{s}\tilde{u}(x_{i},y_{j},t_{n}-\frac{s\tau}{2})+\partial_{s}\tilde{u}(x_{i},y_{j},t_{n}+\frac{s\tau}{2})\biggr]ds.
\end{equation*}
Thus, by Assumption \ref{regularity}, setting $c_{0}=1+\frac{w_{0}}{2}$
and putting the small term
\begin{equation*}
\frac{\left(\tau^{\bar{\alpha}+1}\chi_{0}^{(\bar{\alpha})}\right)^{2}}{4c_{0}}\delta_{x}^{2}\delta_{y}^{2}\delta_{t}\tilde{u}_{i,j}^{n-\frac{1}{2}}:=(R_{4})_{i,j}^{n-\frac{1}{2}}
\end{equation*}
into the both sides of \eqref{e4.7}, we can rewrite this formula as
\begin{equation}\label{eq4.8}
\begin{aligned}
&c_{0}\delta_t\mathcal{A}_{h}\tilde{u}_{i,j}^{n-\frac{1}{2}}+ \sum\limits_{k=1}^{n-1} \tilde{w}_{n-k} \delta_t\mathcal{A}_{h}\tilde{u}_{i,j}^{k-\frac{1}{2}} - \tau^{\bar{\alpha}} \left( \sum\limits_{p=0}^n \chi_{p}^{(\bar{\alpha})} \Lambda_{h} \tilde{u}_{i,j}^{n-p-\frac{1}{2}} +  \hat{\rho}_n^{(\bar{\alpha})} \Lambda_{h} \tilde{u}_{i,j}^0 \right) \\
&+\frac{\left(\tau^{\bar{\alpha}+1}\chi_{0}^{(\bar{\alpha})}\right)^{2}}{4c_{0}}\delta_{x}^{2}\delta_{y}^{2}\delta_{t}\tilde{u}_{i,j}^{n-\frac{1}{2}}=\mathcal{A}_{h}\mathcal{F}_{i,j}^{n-\frac{1}{2}}+\tilde{R}_{i,j}^{n-\frac{1}{2}},\quad (i,j)\in\omega,\quad 1\le n\le N,
\end{aligned}
\end{equation}
with $\tilde{R}_{i,j}^{n-\frac{1}{2}}=\mathcal{A}_{h}R_{i,j}^{n-\frac{1}{2}}+\mathcal{O}(\tau^{\alpha_{0}}+h_{1}^{4}+h_{2}^{4})$.
Next, omitting the local truncation error term $\tilde{R}_{i,j}^{n-\frac{1}{2}}$ in
\eqref{eq4.8}, and replacing $\tilde{u}_{i,j}^{n}$ with its numerical approximation
$\widetilde{U}_{i,j}^{n}$, we obtain the formally second-order ADI compact difference
scheme as follows
\begin{equation}\label{e4.9}
\begin{aligned}
&c_{0}\delta_t\mathcal{A}_{h}\widetilde{U}_{i,j}^{n-\frac{1}{2}}+ \sum\limits_{k=1}^{n-1} \tilde{w}_{n-k} \delta_t\mathcal{A}_{h}\widetilde{U}_{i,j}^{k-\frac{1}{2}} - \tau^{\bar{\alpha}} \left( \sum\limits_{p=0}^n \chi_{p}^{(\bar{\alpha})} \Lambda_{h} \widetilde{U}_{i,j}^{n-p-\frac{1}{2}} +  \hat{\rho}_n^{(\bar{\alpha})} \Lambda_{h} \widetilde{U}_{i,j}^0 \right) \\
&+\frac{\left(\tau^{\bar{\alpha}+1}\chi_{0}^{(\bar{\alpha})}\right)^{2}}{4c_{0}}\delta_{x}^{2}\delta_{y}^{2}\delta_{t}\widetilde{U}_{i,j}^{n-\frac{1}{2}}=\mathcal{A}_{h}\mathcal{F}_{i,j}^{n-\frac{1}{2}},\quad (i,j)\in\omega,\quad 1\le n\le N,
\end{aligned}
\end{equation}
which subjects to the conditions as follows
\begin{equation}\label{e4.10}
\widetilde{U}_{i,j}^{0}=0,\quad (i,j)\in \omega,\qquad  \widetilde{U}_{i,j}^{n}=0,\quad (i,j)\in \partial\omega,~0\le n\le N.
\end{equation}

In fact, after getting $\widetilde{U}_{i,j}^{n}$, we can further obtain the numerical solution of \eqref{Model2} through $U_{i,j}^{n}=\widetilde{U}_{i,j}^{n}+u_{0}(x_i,y_j)$. For computation, we rewrite \eqref{e4.9} into a more familiar ADI form. First, denote
\begin{equation*}
\eta_{0}=\frac{\tau^{\bar{\alpha}+1}\chi_{0}^{(\bar{\alpha})}}{2c_{0}},\quad \widetilde{\mathcal{E}}_{i,j}^{n}=\widetilde{U}_{i,j}^{n}-\widetilde{U}_{i,j}^{n-1},\quad 1\le n\le N.
\end{equation*}
Under the circumstances, we multiply \eqref{e4.9} by $\tau$ to reformulate the
equation as
\begin{equation}\label{e4.11}
c_{0}\mathcal{A}_{h}\widetilde{\mathcal{E}}_{i,j}^{n}-\frac{\tau^{\bar{\alpha}+1}\chi_{0}^{(\bar{\alpha})}}{2}\Lambda_{h}\widetilde{\mathcal{E}}_{i,j}^{n}+\frac{\left(\tau^{\bar{\alpha}+1}\chi_{0}^{(\bar{\alpha})}\right)^{2}}{4c_{0}}\delta_{x}^{2}\delta_{y}^{2}\widetilde{\mathcal{E}}_{i,j}^{n}=\widetilde{F}_{i,j}^{n},
\qquad (i,j)\in\omega,\quad 1\le n\le N,
\end{equation}
where
\begin{equation*}
\widetilde{F}_{i,j}^{n}=c_{0}\eta_{0}\Lambda_{h}\widetilde{U}_{i,j}^{n-1}+\tau^{\bar{\alpha}+1} \left( \sum\limits_{p=0}^{n-1}\hat{\chi}_{n-p}^{(\bar{\alpha})} \Lambda_{h} \widetilde{U}_{i,j}^{p} +  \hat{\rho}_n^{(\bar{\alpha})} \Lambda_{h} \widetilde{U}_{i,j}^0 \right)
-\sum\limits_{k=1}^{n-1} \tilde{w}_{n-k}\mathcal{A}_{h}\widetilde{\mathcal{E}}_{i,j}^{k}+\tau \mathcal{A}_{h}\mathcal{F}_{i,j}^{n-\frac{1}{2}},
\end{equation*}
with the notation $\hat{\chi}_{p}^{(\bar{\alpha})}=\frac{1}{2}\left(\chi_{p}^{(\bar{\alpha})}+\chi_{p-1}^{(\bar{\alpha})}\right)$
for $1\le p\le N$. It is easy to split \eqref{e4.11} for $(i,j)\in\omega$ as follows
\begin{equation}\nonumber
c_{0}\left(\mathcal{A}_{1}-\eta_{0}\delta_{x}^{2}\right)\left(\mathcal{A}_{2}-\eta_{0}\delta_{y}^{2}\right)\widetilde{\mathcal{E}}_{i,j}^{n}=\widetilde{F}_{i,j}^{n},\quad (i,j)\in\omega,\quad 1\le n\le N.
\end{equation}
We determine $\widetilde{U}_{i,j}^{n}$ by solving two sets of independent one-dimensional problems. Here, we introduce intermediate variables
\begin{equation*}
\widetilde{\mathcal{E}}_{i,j}^{n,*}=\left(\mathcal{A}_{2}-\eta_{0}\delta_{y}^{2}\right)\widetilde{\mathcal{E}}_{i,j}^{n},\quad (i,j)\in\omega,\quad 1\le n\le N,
\end{equation*}
from which the proposed formally second-order ADI compact difference scheme
\eqref{e4.9}-\eqref{e4.10} can be computed by the following two steps.\\
$\textbf{Step 1:}$ For any fixed $j\in \{1,2,\dots,M_{2}-1\}$, solve the
following one-dimensional linear system in the $x$-direction for $\widetilde{\mathcal{E}}_{i,j}^{n,*}$:
\begin{equation*}
\left\{\begin{array}{l}
c_{0}\left(\mathcal{A}_{1}-\eta_{0}\delta_{x}^{2}\right)\widetilde{\mathcal{E}}_{i,j}^{n,*}=\widetilde{F}_{i,j}^{n},   \quad 1\le i\le M_{1}-1,\\
\widetilde{\mathcal{E}}_{0,j}^{n,*}=\widetilde{\mathcal{E}}_{M_{1},j}^{n,*}=0.
\end{array}\right.
\end{equation*}
$\textbf{Step 2:}$ For each fixed $i\in \{1,2,\dots,M_{1}-1\}$, solve the
following one-dimensional linear system in the $y$-direction for $\widetilde{\mathcal{E}}_{i,j}^{n}$:
\begin{equation*}
\left\{\begin{array}{l}
\left(\mathcal{A}_{2}-\eta_{0}\delta_{y}^{2}\right)\widetilde{\mathcal{E}}_{i,j}^{n}=\widetilde{\mathcal{E}}_{i,j}^{n,*},   \quad 1\le j\le M_{2}-1,\\
\widetilde{\mathcal{E}}_{i,0}^{n}=\widetilde{\mathcal{E}}_{i,M_{2}}^{n}=0,
\end{array}\right.
\end{equation*}
and then find numerical solutions by $\widetilde{U}_{i,j}^{n}=\widetilde{\mathcal{E}}_{i,j}^{n}+\widetilde{U}_{i,j}^{n-1}$ for $(i,j)\in \omega$ and $1\le n\le N$.

\subsection{Analysis of fully discrete scheme}
Below we shall consider the stability and convergence of fully discrete
ADI compact difference scheme \eqref{e4.9}-\eqref{e4.10} by energy technique.
\begin{theorem}
Suppose that $\{\widetilde{U}_{i,j}^{n}\mid(i,j)\in\omega,~1\le n\le N\}$ is the
solution of the formally second-order ADI compact difference scheme \eqref{e4.9}-\eqref{e4.10}, then it holds that
\begin{equation*}
\left\|\widetilde{U}^N\right\| \leq Q \sum\limits_{n=1}^{N} \tau \|\mathcal{F}^{n-\frac{1}{2}}\| \leq Q \left( \|\Delta u_0\| + \| \bar{u}_0\| + \max\limits_{0\leq t \leq t_N}\|f(t)\| \right).
\end{equation*}
\end{theorem}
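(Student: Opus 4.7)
The plan is to test the scheme \eqref{e4.9} in the discrete inner product with $\widetilde{U}^{n-\frac{1}{2}}$ and sum from $n=1$ to an arbitrary $N_0\le N$, then exploit telescoping, operator positivity, and the bounds from Lemma \ref{lemma3.1} to close a Gronwall-type a priori estimate. The initial condition $\widetilde{U}^0=0$ (together with the convention $\widetilde{U}^{-1}=0$, whence $\widetilde{U}^{-\frac{1}{2}}=0$) is used repeatedly. The leading time-difference term telescopes via $(\delta_t v^{n-\frac{1}{2}}, v^{n-\frac{1}{2}})=\frac{1}{2\tau}(\|v^n\|^2-\|v^{n-1}\|^2)$ into $\frac{c_0}{2\tau}\|\widetilde{U}^{N_0}\|_{\mathcal{A}_h}^2$. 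The diffusion convolution term is nonnegative: rewriting $(-\Lambda_h w, v) = (\delta_x w, \delta_x v)_{\mathcal{A}_2}+(\delta_y w, \delta_y v)_{\mathcal{A}_1}$ and applying Lemma \ref{CQpositive} to the sequences $\delta_x \widetilde{U}^{n-\frac{1}{2}}$ and $\delta_y \widetilde{U}^{n-\frac{1}{2}}$ (the quadratic-form positivity of the quadrature kernel extends to any symmetric-positive-definite $\mathcal{A}$-weighted inner product) handles this contribution, while $\hat{\rho}_n\Lambda_h\widetilde{U}^0$ vanishes. The ADI perturbation, via $(\delta_x^2\delta_y^2 w, v)=(\delta_x\delta_y w, \delta_x\delta_y v)$, telescopes to the nonnegative boundary quantity $\frac{(\tau^{\bar{\alpha}+1}\chi_0^{(\bar{\alpha})})^2}{8c_0\tau}\|\delta_x\delta_y\widetilde{U}^{N_0}\|^2$. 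The right-hand side is bounded by $\sum_n\|\mathcal{F}^{n-\frac{1}{2}}\|\|\widetilde{U}^{n-\frac{1}{2}}\|$ using Cauchy--Schwarz and $\|\mathcal{A}_h v\|\le\|v\|$ from Lemma \ref{space2}.

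The main obstacle is the $g'$-history $H_n := \sum_{k=1}^{n-1}\tilde{w}_{n-k}\delta_t\mathcal{A}_h\widetilde{U}^{k-\frac{1}{2}}$, whose kernel is neither monotone nor of definite sign. The key step is summation by parts in $k$, which (thanks to $\widetilde{U}^0=0$) rewrites $H_n$ as $\frac{\tilde{w}_1}{\tau}\mathcal{A}_h\widetilde{U}^{n-1} + \sum_{k=1}^{n-2}\frac{\tilde{w}_{n-k}-\tilde{w}_{n-k-1}}{\tau}\mathcal{A}_h\widetilde{U}^k$. From Lemma \ref{lemma3.1}, $|g'|\le Q$ yields $|\tilde{w}_1/\tau|\le Q$, while the weak-log bound $|g''|\le Q(|\ln t|+1)$, integrated twice over intervals of length $\tau$, delivers $|\tilde{w}_m-\tilde{w}_{m-1}|/\tau\le Q\tau(|\ln t_{\max(m-2,1)}|+|\ln\tau|+1)$ for $m\ge 2$. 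Applying Cauchy--Schwarz, AM--GM, and switching the order of the resulting double summation, the integrability $\tau\sum_m(|\ln t_m|+1)\le \int_0^T(|\ln s|+1)\,ds\le Q$ produces $\sum_{n=1}^{N_0}|(H_n,\widetilde{U}^{n-\frac{1}{2}})| \le Q\sum_{n=0}^{N_0}\|\widetilde{U}^n\|^2$. The weak log-singularity of $g''$ at $t=0$ is precisely the point at which the hypothesis $\alpha'(0)=0$ of Lemma \ref{lemma3.1} becomes indispensable, since without it $g'$ would itself be unbounded and the boundary term $\tilde{w}_1/\tau$ would blow up.

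Combining the preceding estimates, multiplying by $2\tau$, and using $\|w\|_{\mathcal{A}_h}^2\ge\frac{1}{3}\|w\|^2$ from Lemma \ref{space2} yields $\|\widetilde{U}^{N_0}\|^2 \le Q\tau\sum_{n=1}^{N_0}\|\mathcal{F}^{n-\frac{1}{2}}\|\|\widetilde{U}^{n-\frac{1}{2}}\| + Q\tau\sum_{n=0}^{N_0}\|\widetilde{U}^n\|^2$. Setting $M_{N_0}:=\max_{0\le n\le N_0}\|\widetilde{U}^n\|$, bounding $\|\widetilde{U}^{n-\frac{1}{2}}\|\le M_{N_0}$, absorbing a $Q\tau M_{N_0}^2$ contribution into the left side for $\tau$ sufficiently small, dividing by $M_{N_0}$, and invoking the discrete Gronwall inequality closes the bound as $M_N \le Q\tau\sum_{n=1}^{N}\|\mathcal{F}^{n-\frac{1}{2}}\|$ with a constant of the form $Q_0 e^{Q_0 T}$. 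The terminal inequality then uses $\mathcal{F}(t) = (\beta_{\bar{\alpha}}*(\Delta u_0+f))(t) + g(t)\bar{u}_0$, the uniform bound $|g|\le Q$, and the Abel-kernel estimate $\int_0^{t_n}\beta_{\bar{\alpha}}(t_n-s)\,ds \le Q$ to conclude $\tau\sum_n\|\mathcal{F}^{n-\frac{1}{2}}\| \le Q(\|\Delta u_0\|+\|\bar{u}_0\|+\max_{[0,T]}\|f(t)\|)$, completing the estimate.
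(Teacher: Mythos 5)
Your proposal is correct and follows essentially the same route as the paper: testing with $2\tau\widetilde{U}^{n-\frac{1}{2}}$, telescoping the $\delta_t$ and $\delta_x^2\delta_y^2$ terms, invoking Lemma \ref{CQpositive} on the $\mathcal{A}_1$-, $\mathcal{A}_2$-weighted gradient inner products, summing the $g'$-history by parts and bounding the weight differences via $|g'|\le Q$ and $|g''|\le Q(|\ln t|+1)$ from Lemma \ref{lemma3.1}, then closing with the max-index/Gr\"{o}nwall argument. The only cosmetic difference is that the paper works with the composite norm $\|\cdot\|_{\mathcal{B}}$ and divides by its maximizer to obtain a linear estimate, whereas you keep a quadratic form and divide by $M_{N_0}$; the two are equivalent.
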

\begin{remark}
Since $U_{i,j}^{n}=\widetilde{U}_{i,j}^{n}+u_{0}(x_i,y_j)$, we immediately obtain
that the numerical solution to \eqref{Model2} satisfies
\begin{equation*}
\left\|U^N\right\| \leq Q\left( \| u_0\|+ \|\Delta u_0\| + \| \bar{u}_0\| + \max\limits_{0\leq t \leq t_N}\|f(t)\| \right).
\end{equation*}
\end{remark}
\begin{proof} Taking the inner product of \eqref{e4.9} with $2\tau\widetilde{U}^{n-\frac{1}{2}}$,
summing up for $n$ from $1$ to $m$ for $1\le m\le N$,
and noting the fact that $\widetilde{U}^{0}=0$, we have
\begin{equation}\label{e4.12}
\begin{aligned}
& 2\tau c_{0}\sum\limits_{n=1}^{m} \left(\mathcal{A}_{h}\delta_{t}\widetilde{U}^{n-\frac{1}{2}}, \widetilde{U}^{n-\frac{1}{2}} \right) +
2\tau \sum\limits_{n=1}^{m} \sum\limits_{k=1}^{n-1} \tilde{w}_{n-k}  \left(\mathcal{A}_{h}\delta_t \widetilde{U}^{k-\frac{1}{2}}, \widetilde{U}^{n-\frac{1}{2}} \right) \\
& - 2\tau^{\bar{\alpha}+1} \sum\limits_{n=0}^{m} \sum\limits_{p=0}^n  \chi_{p}^{(\bar{\alpha})}  \left( \Lambda_{h}\widetilde{U}^{n-p-\frac{1}{2}}, \widetilde{U}^{n-\frac{1}{2}} \right)
+\frac{2\tau\left(\tau^{\bar{\alpha}+1}\chi_{0}^{(\bar{\alpha})}\right)^{2}}{4c_{0}}\sum\limits_{n=1}^{m}\left(\delta_{x}^{2}\delta_{y}^{2}\delta_{t}\widetilde{U}^{n-\frac{1}{2}},\widetilde{U}^{n-\frac{1}{2}}\right)\\
=& 2\tau \sum\limits_{n=1}^{m} \left( \mathcal{A}_{h}\mathcal{F}^{n-\frac{1}{2}},  \widetilde{U}^{n-\frac{1}{2}} \right).
\end{aligned}
\end{equation}
First, for the first term on the left-hand side of \eqref{e4.12}, it is obvious that
\begin{equation*}
\left(\mathcal{A}_{h}\delta_{t}\widetilde{U}^{n-\frac{1}{2}}, \widetilde{U}^{n-\frac{1}{2}} \right)=\frac{1}{2\tau}\left((\mathcal{A}_{h}\widetilde{U}^{n},\widetilde{U}^{n})-(\mathcal{A}_{h}\widetilde{U}^{n-1},\widetilde{U}^{n-1})\right),
\end{equation*}
from which it follows that
\begin{equation}\label{e4.13}
2\tau c_{0}\sum\limits_{n=1}^{m} \left(\mathcal{A}_{h}\delta_{t}\widetilde{U}^{n-\frac{1}{2}}, \widetilde{U}^{n-\frac{1}{2}} \right)=c_{0}\sum\limits_{n=1}^{m} \left(\left\|\widetilde{U}^{n}\right\|_{\mathcal{A}_{h}}^{2}-\left\|\widetilde{U}^{n-1}\right\|_{\mathcal{A}_{h}}^{2}\right)=c_{0}\left\|\widetilde{U}^{m}\right\|_{\mathcal{A}_{h}}^{2}.
\end{equation}
Then, for the second term, taking the fact
\begin{equation*}
\begin{split}
\sum\limits_{k=1}^{n-1} \tilde{w}_{n-k} \delta_t \widetilde{U}^{k-\frac{1}{2}} &= \frac{1}{\tau} \left[ \tilde{w}_1 \widetilde{U}^{n-1} + \sum\limits_{k=1}^{n-2}(\tilde{w}_{n-k}-\tilde{w}_{n-k-1})\widetilde{U}^k -\tilde{w}_{n-1}\widetilde{U}^0  \right]\\
&= \frac{1}{\tau} \left[ \tilde{w}_1 \widetilde{U}^{n-1} + \sum\limits_{k=2}^{n-1}(\tilde{w}_{k}-\tilde{w}_{k-1})\widetilde{U}^{n-k} \right]
\end{split}
\end{equation*}
into consideration, with the help of the Cauchy-Schwarz inequality, we can obtain
\begin{equation}\label{e4.14}
\begin{aligned}
2\tau \sum\limits_{n=1}^{m} \sum\limits_{k=1}^{n-1} \tilde{w}_{n-k}  \left(\mathcal{A}_{h}\delta_t \widetilde{U}^{k-\frac{1}{2}}, \widetilde{U}^{n-\frac{1}{2}} \right)
\le & \sum\limits_{n=2}^{m}|\tilde{w}_{1}|\left\|\widetilde{U}^{n-1}\right\|_{\mathcal{A}_{h}}\left\|\widetilde{U}^{n}\right\|_{\mathcal{A}_{h}}+\sum\limits_{n=3}^{m}\sum\limits_{k=2}^{n-1}|\tilde{w}_{k}-\tilde{w}_{k-1}|\left\|\widetilde{U}^{n-k}\right\|_{\mathcal{A}_{h}}\left\|\widetilde{U}^{n}\right\|_{\mathcal{A}_{h}}\\
&+\sum\limits_{n=2}^{m}|\tilde{w}_{1}|\left\|\widetilde{U}^{n-1}\right\|_{\mathcal{A}_{h}}^{2}+\sum\limits_{n=3}^{m}\sum\limits_{k=2}^{n-1}|\tilde{w}_{k}-\tilde{w}_{k-1}|\left\|\widetilde{U}^{n-k}\right\|_{\mathcal{A}_{h}}\left\|\widetilde{U}^{n-1}\right\|_{\mathcal{A}_{h}}.
\end{aligned}
\end{equation}
Next, for the third term on the left-hand side of \eqref{e4.12}, utilizing Lemma
\ref{CQpositive} yields
\begin{equation}\label{e4.15}
\begin{aligned}
  &- \sum\limits_{n=0}^{m} \sum\limits_{p=0}^n  \chi_{p}^{(\bar{\alpha})}  \left( \Lambda_{h}\widetilde{U}^{n-p-\frac{1}{2}}, \widetilde{U}^{n-\frac{1}{2}} \right)\\
=& - \sum\limits_{n=0}^{m} \sum\limits_{p=0}^n  \chi_{p}^{(\bar{\alpha})}  \left( \mathcal{A}_{2}\delta_{x}^{2}\widetilde{U}^{n-p-\frac{1}{2}}, \widetilde{U}^{n-\frac{1}{2}} \right)- \sum\limits_{n=0}^{m} \sum\limits_{p=0}^n  \chi_{p}^{(\bar{\alpha})}  \left( \mathcal{A}_{1}\delta_{y}^{2}\widetilde{U}^{n-p-\frac{1}{2}}, \widetilde{U}^{n-\frac{1}{2}} \right)\\
=& \sum\limits_{n=0}^{m} \sum\limits_{p=0}^n  \chi_{p}^{(\bar{\alpha})}  \left( \delta_{x}\widetilde{U}^{n-p-\frac{1}{2}}, \delta_{x}\widetilde{U}^{n-\frac{1}{2}} \right)_{\mathcal{A}_{2}}+\sum\limits_{n=0}^{m} \sum\limits_{p=0}^n  \chi_{p}^{(\bar{\alpha})}  \left( \delta_{y}\widetilde{U}^{n-p-\frac{1}{2}}, \delta_{y}\widetilde{U}^{n-\frac{1}{2}} \right)_{\mathcal{A}_{1}}
\ge 0.
\end{aligned}
\end{equation}
Finally, for the last term, it holds that
\begin{equation}\label{e4.16}
\begin{aligned}
 \frac{2\tau\left(\tau^{\bar{\alpha}+1}\chi_{0}^{(\bar{\alpha})}\right)^{2}}{4c_{0}}\sum\limits_{n=1}^{m}\left(\delta_{x}^{2}\delta_{y}^{2}\delta_{t}\widetilde{U}^{n-\frac{1}{2}},\widetilde{U}^{n-\frac{1}{2}}\right)
=&\frac{\left(\tau^{\bar{\alpha}+1}\chi_{0}^{(\bar{\alpha})}\right)^{2}}{4c_{0}}\sum\limits_{n=1}^{m}\left(\delta_{x}^{2}\delta_{y}^{2}(\widetilde{U}^{n}-\widetilde{U}^{n-1}),\widetilde{U}^{n}+\widetilde{U}^{n-1}\right)\\
=&\frac{\left(\tau^{\bar{\alpha}+1}\chi_{0}^{(\bar{\alpha})}\right)^{2}}{4c_{0}}\sum\limits_{n=1}^{m}\left(\delta_{x}\delta_{y}(\widetilde{U}^{n}-\widetilde{U}^{n-1}),\delta_{x}\delta_{y}(\widetilde{U}^{n}+\widetilde{U}^{n-1})\right)\\
=&c_{0}\eta_{0}^{2}\left\|\delta_{x}\delta_{y}\widetilde{U}^{m}\right\|^{2}.
\end{aligned}
\end{equation}
Taking \eqref{e4.13}, \eqref{e4.14}, \eqref{e4.15} and \eqref{e4.16} into \eqref{e4.12},
and with the notation
\begin{equation*}
\left\|V\right\|_{\mathcal{B}}^{2}=\left\|V\right\|_{\mathcal{A}_{h}}^{2}+\eta_{0}^{2}\left\|\delta_{x}\delta_{y}V\right\|^{2},
\end{equation*}
we have
\begin{equation*}
\begin{aligned}
c_{0}\left\|\widetilde{U}^{m}\right\|_{\mathcal{B}}^{2}\le& \sum\limits_{n=2}^{m}|\tilde{w}_{1}|\left(\left\|\widetilde{U}^{n}\right\|_{\mathcal{B}}+\left\|\widetilde{U}^{n-1}\right\|_{\mathcal{B}}\right)\left\|\widetilde{U}^{n-1}\right\|_{\mathcal{A}_{h}}\\
&+\sum\limits_{n=3}^{m}\sum\limits_{k=2}^{n-1}|\tilde{w}_{k}-\tilde{w}_{k-1}|\left\|\widetilde{U}^{n-k}\right\|_{\mathcal{B}}\left(\left\|\widetilde{U}^{n-1}\right\|_{\mathcal{A}_{h}}+\left\|\widetilde{U}^{n}\right\|_{\mathcal{A}_{h}}\right)\\
&+2\tau\sum\limits_{n=1}^{m}\left\|\mathcal{F}^{n-\frac{1}{2}}\right\|_{\mathcal{A}_{h}} \left\|\widetilde{U}^{n-\frac{1}{2}} \right\|_{\mathcal{B}}.
\end{aligned}
\end{equation*}
Choosing a suitable $\mathcal{K}$ such that $\left\|\widetilde{U}^{\mathcal{K}}\right\|_{\mathcal{B}}=\max\limits_{1\le n\le N}\left\|\widetilde{U}^{n}\right\|_{\mathcal{B}}$, thus
\begin{equation}\label{e4.17}
\begin{aligned}
c_{0}\left\|\widetilde{U}^{\mathcal{K}}\right\|_{\mathcal{B}}\le& 2\sum\limits_{n=2}^{\mathcal{K}}|\tilde{w}_{1}|\left\|\widetilde{U}^{n-1}\right\|_{\mathcal{A}_{h}}+2\tau\sum\limits_{n=1}^{\mathcal{K}}\left\|\mathcal{F}^{n-\frac{1}{2}}\right\|_{\mathcal{A}_{h}}
+\sum\limits_{n=3}^{\mathcal{K}}\sum\limits_{k=2}^{n-1}|\tilde{w}_{k}-\tilde{w}_{k-1}|\left(\left\|\widetilde{U}^{n-1}\right\|_{\mathcal{A}_{h}}+\left\|\widetilde{U}^{n}\right\|_{\mathcal{A}_{h}}\right)\\
\le& 2\sum\limits_{n=1}^{N-1}|\tilde{w}_{1}|\left\|\widetilde{U}^{n}\right\|_{\mathcal{A}_{h}}+2\tau\sum\limits_{n=1}^{N}\left\|\mathcal{F}^{n-\frac{1}{2}}\right\|_{\mathcal{A}_{h}}
+\sum\limits_{n=3}^{N}\sum\limits_{k=2}^{n-1}|\tilde{w}_{k}-\tilde{w}_{k-1}|\left(\left\|\widetilde{U}^{n-1}\right\|_{\mathcal{A}_{h}}+\left\|\widetilde{U}^{n}\right\|_{\mathcal{A}_{h}}\right).
\end{aligned}
\end{equation}
At this point, we turn our attention to deriving estimates for the coefficients in \eqref{e4.17}. By applying Lemma \ref{lemma3.1} together with the Taylor expansion, we obtain
\begin{equation}\label{e4.18}
\begin{aligned}
\sum\limits_{k=2}^{n-1}|\tilde{w}_{k}-\tilde{w}_{k-1}|&=\frac{1}{2}\sum\limits_{k=2}^{n-1}|w_{k}-w_{k-1}+w_{k-1}-w_{k-2}|\le \sum\limits_{k=1}^{n-1}|w_{k}-w_{k-1}|\\
&\le \tau \sum\limits_{k=1}^{n-1}\int_{t_{k-1}}^{t_{k+1}} |g''(t)|dt \\
&\le Q\tau \sum\limits_{k=1}^{n-1}\int_{t_{k-1}}^{t_{k+1}} (|\ln t|+1)dt
\le Q\tau\int_{0}^{t_n}t^{-\varepsilon}\le Q\tau,
\end{aligned}
\end{equation}
in which, $0< \varepsilon\ll 1$. Meanwhile, in \eqref{e4.18}, we used the fact that
\begin{equation*}
\begin{split}
g(t_{k+1})-2g(t_{k})+g(t_{k-1}) = \int_{t_{k}}^{t_{k+1}} (t_{k+1}-t) g''(t)dt + \int_{t_{k-1}}^{t_{k}} (t-t_{k-1}) g''(t)dt.
\end{split}
\end{equation*}
On the other hand, it is obvious that
\begin{align}
  &|w_0| \leq \int_0^{\tau} |g'(t)|dt \leq Q \tau,\label{e4.19}\\
  &|\tilde{w}_1|= |\frac{w_{1}+w_{0}}{2}|\leq \frac{1}{2}\int_0^{2\tau} |g'(t)|dt \leq Q \tau.\label{e4.20}
\end{align}
Based on \eqref{e4.18}, \eqref{e4.19} and \eqref{e4.20}, with the help
of Lemma \ref{space2}, \eqref{e4.17} turns into
\begin{equation}\label{e4.21}
\begin{split}
\frac{\sqrt{3}}{3}\left\|\widetilde{U}^N\right\| \leq \left\|\widetilde{U}^{\mathcal{K}}\right\|_{\mathcal{A}_{h}}
\leq Q\left( \tau \sum\limits_{n=1}^{N} \left\|\widetilde{U}^n\right\|
+ \tau \sum\limits_{n=1}^{N} \left\| \mathcal{F}^{n-\frac{1}{2}}\right\|\right).
\end{split}
\end{equation}
The use of Gr\"{o}nwall's lemma with
$\beta_{\bar{\alpha}}(t)\in L_{1,\text{loc}}(0,\infty)$ completes the proof.
\end{proof}

Next, we shall deduce the convergence of the fully discrete scheme. Denote
$\tilde{e}_{i,j}^n = \tilde{u}_{i,j}^n - \widetilde{U}_{i,j}^n$ with $(i,j)\in\omega$,
$0\leq n \leq N$. Then, subtracting \eqref{e4.9} from \eqref{eq4.8}, we get the
following error equations
\begin{equation}\label{e4.22}
\begin{aligned}
&c_{0}\delta_t\mathcal{A}_{h}\tilde{e}_{i,j}^{n-\frac{1}{2}}+ \sum\limits_{k=1}^{n-1} \tilde{w}_{n-k} \delta_t\mathcal{A}_{h}\tilde{e}_{i,j}^{k-\frac{1}{2}} - \tau^{\bar{\alpha}} \sum\limits_{p=0}^n \chi_{p}^{(\bar{\alpha})} \Lambda_{h} \tilde{e}_{i,j}^{n-p-\frac{1}{2}} \\
&+\frac{\left(\tau^{\bar{\alpha}+1}\chi_{0}^{(\bar{\alpha})}\right)^{2}}{4c_{0}}\delta_{x}^{2}\delta_{y}^{2}\delta_{t}\tilde{e}_{i,j}^{n-\frac{1}{2}}=\tilde{R}_{i,j}^{n-\frac{1}{2}},\quad (i,j)\in\omega,\quad 1\le n\le N,
\end{aligned}
\end{equation}
\begin{equation}\label{e4.23}
\tilde{e}_{i,j}^{0}=0,\quad (i,j)\in \omega,\qquad  \tilde{e}_{i,j}^{n}=0,\quad (i,j)\in \partial\omega,~0\le n\le N.
\end{equation}
Let $u_{i,j}^n=u(x_{i},y_{j},t_{n})$ be the solution of \eqref{Model2} and $U_{i,j}^n$
be the approximation of $u_{i,j}^n$. Denote $e_{i,j}^n = u_{i,j}^n - U_{i,j}^n$
for $(i,j)\in\omega$, $0\leq n \leq N$. It is clear that $\tilde{e}_{i,j}^n=e_{i,j}^n$.
Based on \eqref{e4.22} and \eqref{e4.23}, we obtain the following convergence results.

\begin{theorem}
Let $\{u_{i,j}^{n}\mid(i,j)\in\omega,~1\le n\le N\}$ be the solution of \eqref{Model2}
and $\{U_{i,j}^{n}\mid(i,j)\in\omega,~1\le n\le N\}$ be its numerical solution.
Then, based on Lemma \ref{lemma3.1} and Assumption \ref{regularity}, we have the error estimate
\begin{equation*}
\max\limits_{1\leq n \leq N} \|u^n-U^n\| \leq Q \left(\tau^{\alpha_0}+h_{1}^{4}+h_{2}^{4}\right).
\end{equation*}
\end{theorem}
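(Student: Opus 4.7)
Because $u_{i,j}^n = \tilde{u}_{i,j}^n + u_0(x_i,y_j)$ and $U_{i,j}^n = \widetilde{U}_{i,j}^n + u_0(x_i,y_j)$, the spatial errors coincide, $e^n=\tilde{e}^n$, so it suffices to bound $\tilde{e}^n$ through the linear error system \eqref{e4.22}--\eqref{e4.23}. The left-hand side of \eqref{e4.22} has exactly the structure of \eqref{e4.9}, hence the energy argument underlying the preceding stability theorem applies verbatim with the forcing $\mathcal{A}_h\mathcal{F}^{n-\frac{1}{2}}$ replaced by the truncation residual $\tilde{R}^{n-\frac{1}{2}}$. This produces
\begin{equation*}
\max_{1\le n\le N}\|\tilde{e}^n\| \;\le\; Q \sum_{n=1}^{N}\tau\,\|\tilde{R}^{n-\frac{1}{2}}\|,
\end{equation*}
and the whole problem reduces to showing that the right-hand side is of order $\tau^{\alpha_0}+h_1^4+h_2^4$.

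Recalling $\tilde{R}^{n-\frac{1}{2}}=\mathcal{A}_h R^{n-\frac{1}{2}}+\mathcal{O}(h_1^4+h_2^4)$ and $R^n=(R_1)^n-(R_2)^n+(R_3)^n$, I would treat the three pieces in turn, in each case using the pointwise singular bounds of Assumption \ref{regularity} together with $|g'|\le Q$ and $|g''(t)|\le Q(|\ln t|+1)$ from Lemma \ref{lemma3.1}. For $(R_1)$ the first-step formula together with the regularity bound $|\partial_t^2 u|\le Qt^{\alpha_0-2}$ (obtained by integrating $\partial_t^3 u$) gives $\tau\,\|(R_1)^{\frac{1}{2}}\|\le Q\tau^{\alpha_0}$, while for $n\ge 2$ the direct bound $|\partial_t^3 u|\le Qt^{\alpha_0-3}$ combined with $\int_\tau^T s^{\alpha_0-3}\,ds\le Q\tau^{\alpha_0-2}$ yields $\sum_{n\ge 2}\tau\,\|(R_1)^{n-\frac{1}{2}}\|\le Q\tau^{\alpha_0+1}$. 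For $(R_3)$ I would apply Lemma \ref{CQerror} to $\phi=\Delta\tilde{u}$ with $|\partial_t^2\Delta u|\le Qt^{\alpha_0-2}$ and sum the three quadrature-error pieces, each of which contributes the desired $\tau^{\alpha_0}$ rate.

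I expect $(R_2)$ to be the main technical obstacle. Its integrand couples the log-singular factor $g''(t_n-\theta)$ with the singular derivative $\partial_s^2\tilde{u}(s)\sim s^{\alpha_0-2}$, so the double summation must be split: far-diagonal terms $k\ll n$ are controlled by an integral comparison against $\int_0^{t_{n-1}}(|\ln(t_n-s)|+1)\,s^{\alpha_0-2}\,ds$, whereas the near-diagonal indices $k\in\{n-1,n\}$ have to be handled by hand because both singularities activate simultaneously. It is here that the weakened hypothesis $\alpha'(0)=0$ supplied by Lemma \ref{lemma3.1}, which forces $|g'|$ to stay uniformly bounded and only allows $|g''|$ to blow up logarithmically, is exactly what prevents the logarithm from eroding the $\tau^{\alpha_0}$ rate.

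The remaining contributions are essentially routine: the stabiliser $\frac{(\tau^{\bar\alpha+1}\chi_0^{(\bar\alpha)})^2}{4c_0}\delta_x^2\delta_y^2\delta_t\tilde{u}^{n-\frac{1}{2}}$ introduced in \eqref{eq4.8} is $\mathcal{O}(\tau^{2\alpha_0-1})$ per step by Assumption \ref{regularity}, hence absorbable into the $\tau^{\alpha_0}$ budget since $\alpha_0>1$, and the $\mathcal{O}(h_1^4+h_2^4)$ tail comes directly from Lemma \ref{space3}. Collecting the three residual bounds, substituting back into the stability estimate, and noting the convergence of the weights $\tilde{w}_k$ embedded in that estimate (so that no additional Grönwall loss occurs) delivers $\max_n\|\tilde{e}^n\|\le Q(\tau^{\alpha_0}+h_1^4+h_2^4)$, which is the claim.
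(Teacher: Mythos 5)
Your proposal follows essentially the same route as the paper: reduce to the error system \eqref{e4.22}--\eqref{e4.23}, reuse the stability energy argument plus discrete Gr\"onwall to get $\max_n\|\tilde e^n\|\le Q\tau\sum_n\|\tilde R^{n-\frac12}\|$, and then bound $(R_1)$, $(R_2)$, $(R_3)$ separately via Assumption \ref{regularity}, Lemma \ref{lemma3.1} (with the near-/far-diagonal splitting of $(R_2)$ exactly as the paper's $\mathcal{R}_1^n+\mathcal{R}_2^n$ decomposition), and Lemma \ref{CQerror}. The only blemishes are two over-optimistic intermediate exponents (the tail of $(R_1)$ actually sums to $Q\tau^{\alpha_0}$, not $Q\tau^{\alpha_0+1}$, and the stabiliser is $\mathcal{O}(\tau^{2\alpha_0})$ per step), neither of which affects the final $\tau^{\alpha_0}+h_1^4+h_2^4$ estimate.
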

\begin{proof}
On the basis of \eqref{e4.21}, in combination with \eqref{e4.22} and \eqref{e4.23},
we have
\begin{equation*}
\begin{split}
\left\|\tilde{e}^N\right\| \leq  Q \left(\tau \sum\limits_{n=1}^{N} \|\tilde{e}^n\|
+ \tau \sum\limits_{n=1}^{N} \left\| \tilde{R}^{n-\frac{1}{2}} \right\|\right),
\end{split}
\end{equation*}
which follows from discrete Gr\"{o}nwall's lemma that
\begin{equation}\label{e4.24}
\left\|\tilde{e}^N\right\| \leq  Q \tau \sum\limits_{n=1}^{N} \left\| (R_1)^{n-\frac{1}{2}} - (R_2)^{n-\frac{1}{2}} + (R_3)^{n-\frac{1}{2}}\right\|+Q\left(\tau^{\alpha_{0}}+h_{1}^{4}+h_{2}^{4}\right).
\end{equation}
Then, we shall analyze all the terms at the right-hand side of \eqref{e4.24}.
At first, based on Assumption \ref{regularity}, \eqref{e4.3} gives
\begin{equation}\label{e4.25}
\begin{split}
\tau \sum\limits_{n=1}^{N} \left\| (R_1)^{n-\frac{1}{2}} \right\| = \tau \left\| (R_1)^{\frac{1}{2}} \right\| +  \tau \sum\limits_{n=2}^{N} \left\| (R_1)^{n-\frac{1}{2}} \right\|
&\leq Q\tau\int_0^{t_1}\left\|\p_{t}^2\tilde{u}(t)\right\|dt+Q\tau^{2}\sum\limits_{n=2}^{N}\int_{t_{n-1}}^{t_n}\left\|\p_{t}^3\tilde{u}(t)\right\|dt\\
& \leq Q\tau\int_0^{\tau} t^{\alpha_0-2}dt + Q \tau^2 \int_{t_1}^{t_N} t^{\alpha_0-3}dt \leq Q\tau^{\alpha_0}.
\end{split}
\end{equation}
Subsequently, in order to bound $(R_2)^{n-\frac{1}{2}}$, we split $(R_2)^n=(R_{21})^n+(R_{22})^n$, where
\begin{align*}
    (R_{21})^n &= \sum\limits_{k=1}^{n}    \int_{t_{k-1}}^{t_k} \Biggl[  \frac{t_{k-1}-s}{\tau} \int_{s}^{t_k} g''(t_n-\theta)(t_k-\theta)d\theta\Biggl]\p_{s}^2 \tilde{u}(s)ds,\\
    (R_{22})^n&=\sum\limits_{k=1}^{n}\int_{t_{k-1}}^{t_k} \Biggl[\frac{s-t_k}{\tau} \int_{s}^{t_{k-1}} g''(t_n-\theta)(t_{k-1}-\theta)d\theta   \Biggl] \p_{s}^2 \tilde{u}(s)ds.
\end{align*}
Then, we further split $(R_{21})^n$ such that
\begin{align*}
    (R_{21})^n =&\sum\limits_{k=1}^{n-1}\int_{t_{k-1}}^{t_k} \Biggl[  \frac{t_{k-1}-s}{\tau} \int_{s}^{t_k} g''(t_n-\theta)(t_k-\theta)d\theta\Biggl]\p_{s}^2 \tilde{u}(s)ds\\
     &+ \int_{t_{n-1}}^{t_n} \Biggl[  \frac{t_{n-1}-s}{\tau} \int_{s}^{t_n} g''(t_n-\theta)(t_n-\theta)d\theta\Biggl]\p_{s}^2 \tilde{u}(s)ds\\
     =&:\mathcal{R}_1^n + \mathcal{R}_2^n.
\end{align*}
Under Assumption \ref{regularity}, Lemma \ref{lemma3.1} helps us yield
\begin{equation*}
\begin{aligned}
|\mathcal{R}_1^n|&\le Q\sum_{k=1}^{n-1}\int_{t_{k-1}}^{t_{k}}\frac{t_{k-1}-s}{\tau}\int_{s}^{t_{k}}(t_{n}-\theta)^{-\varepsilon}(t_{k}-\theta)d\theta \left|\p_{s}^2 \tilde{u}(s)\right|ds\\
&\le Q\tau^{2}\sum_{k=1}^{n-1}\int_{t_{k-1}}^{t_{k}}(t_{n}-s)^{-\varepsilon}s^{\alpha_{0}-2}ds\\
&\leq Q\tau^{2}\int_{0}^{t_{n}}(t_{n}-s)^{-\varepsilon}s^{\alpha_{0}-2}ds
\le Q\tau^{2}t_{n}^{\alpha_{0}-(1+\varepsilon)}\leq Q\tau^{2},
\end{aligned}
\end{equation*}
and
\begin{equation*}
\begin{aligned}
|\mathcal{R}_2^n|&\le Q\int_{t_{n-1}}^{t_{n}}\int_{s}^{t_{k}}(t_{n}-\theta)^{1-\varepsilon}d\theta \left|\p_{s}^2 \tilde{u}(s)\right|ds\\
&\le Q\tau\int_{t_{n-1}}^{t_{n}}(t_{n}-s)^{1-\varepsilon}s^{\alpha_{0}-2}ds\\
&\leq Q\tau^{2-\varepsilon}\int_{t_{n-1}}^{t_{n}}s^{\alpha_{0}-2}ds
\le Q\tau^{2-\varepsilon}\left(t_{n}^{\alpha_{0}-1}-t_{n-1}^{\alpha_{0}-1}\right),
\end{aligned}
\end{equation*}
which indicate that $|(R_{21})^n|\le Q[\tau^2+\tau^{2-\varepsilon}(t_{n}^{\alpha_{0}-1}-t_{n-1}^{\alpha_{0}-1})]$. Applying the same procedure to $(R_{22})^n$ yields an identical estimate. Thus,
\begin{equation}\nonumber
\tau \sum\limits_{n=1}^{N} \left\| (R_2)^n \right\|
\leq Q\left(\tau^{2}+\tau^{3-\varepsilon}T^{\alpha_0-1}\right)\le Q\tau^2,
\end{equation}
which further implies that
\begin{equation}\label{e4.26}
  \tau \sum\limits_{n=1}^{N} \left\| (R_2)^{n-\frac{1}{2}} \right\| \leq Q\tau^{2}.
\end{equation}
In addition, according to Lemma \ref{CQerror}, we have
\begin{equation}\nonumber
\begin{split}
\tau \sum\limits_{n=1}^{N} \left\| (R_3)^n \right\|
\leq & Q\left(\tau \sum\limits_{n=1}^{N} \tau^{2} t_n^{\bar{\alpha}-1}\|\Delta \bar{u}_0\| + \tau^{\bar{\alpha}+2}\int_0^{t_N} s^{\alpha_0-2}ds+ \tau^3 \sum_{n=2}^{N} \int_0^{t_{n-1}}(t_n-s)^{\bar{\alpha}-1} s^{\alpha_0-2}ds\right)\\
\leq & Q\left(\tau^{2}\|\Delta \bar{u}_0\|\int_0^{t_N}s^{\bar{\alpha}-1}ds+\tau^{\alpha_{0}+1}+\tau^{\bar{\alpha}+1}\int_0^{t_N}s^{\alpha_{0}-2}ds\right)\\
\leq & Q\tau^{\alpha_0},
\end{split}
\end{equation}
from which it follows that
\begin{equation}\label{e4.27}
\tau \sum\limits_{n=1}^{N} \left\| (R_3)^{n-\frac{1}{2}} \right\| \leq Q\tau^{\alpha_{0}}.
\end{equation}
Inserting \eqref{e4.25}, \eqref{e4.26} and \eqref{e4.27} into \eqref{e4.24},
we finish the proof of the theorem.
\end{proof}

\section{Accurately second-order ADI compact difference scheme}\label{sec4}

Similarly, assuming that the solution to \eqref{Model2} satisfies the regularity conditions in Assumption \ref{regularity}, we proceed in this section to establish and analyze an ADI compact difference scheme with accurately second-order temporal and fourth-order spatial accuracy.

\subsection{Time semi-discretization}
First, integrating \eqref{Model2} from $t=t_{n-1}$ to $t_{n}$ and multiplying $1/\tau$,
we obtain
\begin{equation}\label{e5.1}
\delta_{t}u^{n-\frac{1}{2}}+\frac{1}{\tau}\int_{t_{n-1}}^{t_{n}}\widehat{G}(t)dt-\frac{1}{\tau}\int_{t_{n-1}}^{t_{n}}\int_{0}^{t}\beta_{\bar{\alpha}}(t-s)\Delta u(s)dsdt=\frac{1}{\tau}\int_{t_{n-1}}^{t_{n}}\bar{f}(t)dt=:\bar{F}^{n},\quad1\leq n\leq N,
\end{equation}
where
\begin{equation*}
\bar{f}(t)= \int_{0}^{t}\beta_{\bar{\alpha}}(t-s)f(s)ds + g(t)\bar{u}_0,\qquad
\widehat{G}(t)=\int_0^tg'(t-s)\partial_s u(s)ds,\qquad t\ge 0.
\end{equation*}
In this case, we have
\begin{equation*}
\begin{aligned}
&\partial_{t}\widehat{G}(t)=g^{\prime}(0)\partial_{t}u(t)+\int_{0}^{t}g^{\prime\prime}(t-s)\partial_{s}u(s)ds,\\
&\partial_{t}^{2}\widehat{G}(t)=g^{\prime}(0)\partial_{t}^{2}u(t)+g^{\prime\prime}(t)\bar{u}_{0}+\int_{0}^{t}g^{\prime\prime}(s)\partial_{s}^{2}u(t-s)ds,
\end{aligned}
\end{equation*}
which, together with Lemma \ref{lemma3.1} and Assumption \ref{regularity}, gives
\begin{equation}\label{e5.2}
    |\partial_{t}^{2}\widehat{G}(t)|\le Q\left(t^{\alpha_0-2}+t^{-\varepsilon}+t^{\alpha_0-1-\varepsilon}\right)\le Qt^{\alpha_0-2},\qquad t\to0^{+}.
\end{equation}
Next, we mainly discuss the second and third terms on the left-hand side of \eqref{e5.1}.
In combination with \eqref{eq4.4}, we utilize the middle rectangle formula to obtain
\begin{equation}\label{e5.3}
\begin{split}
\frac1\tau\int_{t_{n-1}}^{t_n}\widehat{G}(t)dt=\frac{\widehat{G}(t_n)+\widehat{G}(t_{n-1})}{2}+(R_{4})^{n}
=\sum_{k=1}^n\tilde{w}_{n-k}\delta_tu^{k-\frac{1}{2}}+(R_{2}^*)^{n-\frac{1}{2}}+(R_{4})^{n},
\end{split}
\end{equation}
where $(R_{2}^*)^{n}$ is similar to $(R_2)^n$, with the only distinction being the substitution of $\tilde{u}$ by $u$. On the other hand, for the third term, we employ the PI rule to get
\begin{equation}\label{e5.4}
\begin{aligned}
\frac{1}{\tau}\int_{t_{n-1}}^{t_{n}}\int_{0}^{t}\beta_{\bar{\alpha}}(t-s)\Delta u(s)dsdt =\frac{1}{\tau}\int_{t_{n-1}}^{t_{n}}\int_{0}^{t}\beta_{\bar{\alpha}}(t-s)\Delta\breve{u}(s)dsdt+(R_{5})^{n} =\mathcal{P}_{\bar{\alpha}}^{n-\frac{1}{2}}\Delta u+(R_{5})^n,
\end{aligned}
\end{equation}
with
\begin{equation}\label{e5.5}
(R_5)^n=\frac{1}{\tau}\int_{t_{n-1}}^{t_n}\int_0^t\beta_{\bar{\alpha}}(t-s)\left[\Delta u(s)-\Delta\breve{u}(s)\right]dsdt.
\end{equation}
Substituting \eqref{e5.3} and \eqref{e5.4} into \eqref{e5.1}, we have
\begin{equation}\label{e5.6}
\delta_t u^{n-\frac{1}{2}} + \sum\limits_{k=1}^{n} \tilde{w}_{n-k} \delta_t u^{k-\frac{1}{2}}-\mathcal{P}_{\bar{\alpha}}^{n-\frac{1}{2}}\Delta u=\bar{F}^n+(R_*)^n,
\end{equation}
in which $(R_*)^n=(R_5)^n-(R_4)^n-(R_2^*)^{n-\frac{1}{2}}$ for $1\le n\le N$.

Building on the time discretization, we next derive and analyze the corresponding fully discrete scheme.

\subsection{Fully discrete scheme}
Applying the compact operator $\mathcal{A}_{h}$ on both sides of \eqref{e5.6} evaluated at $(x_i,y_j)$, and from Lemma \ref{space3} and Assumption \ref{regularity}, we have
\begin{equation}\label{e5.7}
\delta_t\mathcal{A}_{h}u_{i,j}^{n-\frac{1}{2}} + \sum\limits_{k=1}^{n} \tilde{w}_{n-k} \delta_t \mathcal{A}_{h}u_{i,j}^{k-\frac{1}{2}}-\mathcal{P}_{\bar{\alpha}}^{n-\frac{1}{2}}\Lambda_{h}u_{i,j}=\mathcal{A}_{h}\bar{F}_{i,j}^n+\mathcal{A}_{h}(R_*)_{i,j}^n+\mathcal{O}(h_{1}^{4}+h_{2}^{4}),
\end{equation}
for $(i,j)\in\omega$ and $1\le n\le N$. Similarly, putting the small term
\begin{equation*}
\frac{\tau^{2}\lambda_{1,1}^{2}}{c_{0}}\delta_{x}^{2}\delta_{y}^{2}\delta_{t}u_{i,j}^{\frac{1}{2}}\quad\text{and}\quad \frac{\tau^{2}\lambda_{n,n}^{2}}{4c_{0}}\delta_{x}^{2}\delta_{y}^{2}\delta_{t}u_{i,j}^{n-\frac{1}{2}},\quad 2\le n\le N,
\end{equation*}
into the both sides of \eqref{e5.7}, taking the initial and boundary conditions \eqref{ibc}
into consideration, then, it turns into
\begin{align}
&c_{0}\delta_t\mathcal{A}_{h}u_{i,j}^{\frac{1}{2}} -\lambda_{1,1}\Lambda_{h}u_{i,j}^{1}+\frac{\tau^{2}\lambda_{1,1}^{2}}{c_{0}}\delta_{x}^{2}\delta_{y}^{2}\delta_{t}u_{i,j}^{\frac{1}{2}}=\mathcal{A}_{h}\bar{F}_{i,j}^1+(\tilde{R}_*)_{i,j}^1,\quad (i,j)\in\omega,\label{e5.8}\\
&c_{0}\delta_t\mathcal{A}_{h}u_{i,j}^{n-\frac{1}{2}} + \sum\limits_{k=1}^{n-1} \tilde{w}_{n-k} \delta_t \mathcal{A}_{h}u_{i,j}^{k-\frac{1}{2}}-\mathcal{P}_{\bar{\alpha}}^{n-\frac{1}{2}}\Lambda_{h}u_{i,j}+\frac{\tau^{2}\lambda_{n,n}^{2}}{4c_{0}}\delta_{x}^{2}\delta_{y}^{2}\delta_{t}u_{i,j}^{n-\frac{1}{2}}
=\mathcal{A}_{h}\bar{F}_{i,j}^n+(\tilde{R}_*)_{i,j}^n,\nonumber\\
&\qquad\qquad\qquad\qquad (i,j)\in \omega,\quad 2\le n\le N,\label{e5.9}\\
&u_{i,j}^{0}=u_0(x_i,y_j),\quad (i,j)\in \omega,\qquad  u_{i,j}^{n}=0,\quad (i,j)\in \partial\omega,~0\le n\le N,\label{e5.10}
\end{align}
with $(\tilde{R}_{*})_{i,j}^{n}=\mathcal{A}_h(R_{*})_{i,j}^{n}+\mathcal{O}(\tau^{2}+h_{1}^{4}+h_{2}^{4})$.
Next, omitting the local truncation error term $(\tilde{R}_{*})_{i,j}^{n}$ in
\eqref{e5.8} and \eqref{e5.9}, and replacing $u_{i,j}^{n}$ with its numerical approximation
$U_{i,j}^{n}$, we obtain the accurately second-order ADI compact difference
scheme as follows
\begin{align}
&c_{0}\delta_t\mathcal{A}_{h}U_{i,j}^{\frac{1}{2}} -\lambda_{1,1}\Lambda_{h}U_{i,j}^{1}+\frac{\tau^{2}\lambda_{1,1}^{2}}{c_{0}}\delta_{x}^{2}\delta_{y}^{2}\delta_{t}U_{i,j}^{\frac{1}{2}}=\mathcal{A}_{h}\bar{F}_{i,j}^1,\quad (i,j)\in\omega,\label{e5.11}\\
&c_{0}\delta_t\mathcal{A}_{h}U_{i,j}^{n-\frac{1}{2}} + \sum\limits_{k=1}^{n-1} \tilde{w}_{n-k} \delta_t \mathcal{A}_{h}U_{i,j}^{k-\frac{1}{2}}-\mathcal{P}_{\bar{\alpha}}^{n-\frac{1}{2}}\Lambda_{h}U_{i,j}+\frac{\tau^{2}\lambda_{n,n}^{2}}{4c_{0}}\delta_{x}^{2}\delta_{y}^{2}\delta_{t}U_{i,j}^{n-\frac{1}{2}}=\mathcal{A}_{h}\bar{F}_{i,j}^n,\nonumber\\
&\qquad\qquad\qquad (i,j)\in \omega,\quad 2\le n\le N,\label{e5.12}\\
&U_{i,j}^{0}=u_0(x_i,y_j),\quad (i,j)\in \omega,\qquad  U_{i,j}^{n}=0,\quad (i,j)\in \partial\omega,~0\le n\le N.\label{e5.13}
\end{align}

Ultimately, this yields the familiar ADI form.
\begin{equation}\nonumber
c_{0}\left(\mathcal{A}_{1}-\eta_{n}\delta_{x}^{2}\right)\left(\mathcal{A}_{2}-\eta_{n}\delta_{y}^{2}\right)\mathcal{E}_{i,j}^{n}=\hat{F}_{i,j}^{n},\quad (i,j)\in\omega,\quad 1\le n\le N,
\end{equation}
where
\begin{align*}
\hat{F}_{i,j}^{1}=&\tau\lambda_{1,1}\Lambda_{h}U^{0}_{i,j}+\tau\mathcal{A}_{h}\bar{F}_{i,j}^{1},\\
\hat{F}_{i,j}^{n}=&\tau\lambda_{n,n}\Lambda_{h}U^{n-1}_{i,j}+\tau\lambda_{n,1}\Lambda_{h}U^{1}_{i,j}+\tau\sum_{k=2}^{n-1}\lambda_{n,j}\Lambda_{h}U^{k-\frac{1}{2}}_{i,j}-\sum\limits_{k=1}^{n-1} \tilde{w}_{n-k}\mathcal{A}_{h}\mathcal{E}_{i,j}^{k}+\tau\mathcal{A}_{h}\bar{F}_{i,j}^{n},
\end{align*}
with the notations
\begin{align*}
&\mathcal{E}_{i,j}^{n}=U_{i,j}^n-U_{i,j}^{n-1},\qquad 1\le n\le N,\\
&\eta_{1}=\frac{\tau\lambda_{1,1}}{c_{0}},\quad \eta_{n}=\frac{\tau\lambda_{n,n}}{2c_{0}},\quad n\ge 2.
\end{align*}
As the calculations closely parallel those for the formally second-order ADI compact difference scheme, we omit the details for brevity.

\subsection{Analysis of fully discrete scheme}
Below we shall derive the stability and convergence of the fully discrete ADI compact difference scheme \eqref{e5.11}-\eqref{e5.13}.
\begin{theorem}
Suppose that $\{U_{i,j}^{n}\mid(i,j)\in\omega,~1\le n\le N\}$ is the solution of the accurately second-order ADI compact difference scheme \eqref{e5.11}-\eqref{e5.13}, then it holds that
\begin{equation*}
\left\|U^N\right\| \leq Q \left( \left\|U^{0}\right\|_{\mathcal{C}} + \tau\sum\limits_{n=1}^{N} \|\bar{F}^{n}\| \right) \leq Q \left( \left\|U^{0}\right\|_{\mathcal{C}} + \| \bar{u}_0\| + \max\limits_{0\leq t \leq t_N}\|f(t)\| \right).
\end{equation*}
\end{theorem}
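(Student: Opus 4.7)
The plan is to mimic the energy argument from the preceding section, with Lemma \ref{PIpositive} playing the role of Lemma \ref{CQpositive} for the diffusion term. The structural novelty is that Lemma \ref{PIpositive} requires testing at level $n=1$ against $U^1$ and at levels $n\ge 2$ against $U^{n-\frac{1}{2}}$. Accordingly, I would take the inner product of \eqref{e5.11} with $2\tau U^1$ and of \eqref{e5.12} with $2\tau U^{n-\frac{1}{2}}$, then sum from $n=2$ up to any chosen $m\le N$ and add the $n=1$ contribution.

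After these pairings, each term on the left is treated as follows. The time-derivative contributions $c_0\delta_t\mathcal{A}_h U^{k-\frac{1}{2}}$ telescope to $c_0\|U^m\|_{\mathcal{A}_h}^2 - c_0\|U^0\|_{\mathcal{A}_h}^2$ (with an extra non-negative residual from the $n=1$ pairing using $2(U^1-U^0,U^1)_{\mathcal{A}_h}=\|U^1\|_{\mathcal{A}_h}^2+\|U^1-U^0\|_{\mathcal{A}_h}^2-\|U^0\|_{\mathcal{A}_h}^2$). The memory convolution $\sum_{k=1}^{n-1}\tilde w_{n-k}\delta_t\mathcal{A}_h U^{k-\frac{1}{2}}$ is rearranged by summation by parts exactly as in \eqref{e4.14}; combined with the bounds $|\tilde w_1|\le Q\tau$ and $\sum_{k=2}^{n-1}|\tilde w_k-\tilde w_{k-1}|\le Q\tau$ secured in \eqref{e4.18}--\eqref{e4.20} via Lemma \ref{lemma3.1}, this yields a contribution bounded by $Q\tau\sum_n\|U^n\|_{\mathcal{A}_h}^2$. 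The PI-rule Laplacian term is split through $\Lambda_h=\mathcal{A}_2\delta_x^2+\mathcal{A}_1\delta_y^2$ and, after the discrete integration-by-parts identities, reduces to sums of the form $v^{1}\mathcal{P}_{\bar\alpha}^{\frac{1}{2}}v + \sum_{n=2}^m v^{n-\frac{1}{2}}\mathcal{P}_{\bar\alpha}^{n-\frac{1}{2}}v$ with $v=\delta_x U$ or $v=\delta_y U$, each of which is non-negative by Lemma \ref{PIpositive}. Finally, the ADI perturbations $\tfrac{\tau^2\lambda_{1,1}^2}{c_0}\delta_x^2\delta_y^2\delta_t U^{\frac{1}{2}}$ and $\tfrac{\tau^2\lambda_{n,n}^2}{4c_0}\delta_x^2\delta_y^2\delta_t U^{n-\frac{1}{2}}$ telescope as in \eqref{e4.16}, producing a positive quantity proportional to $\|\delta_x\delta_y U^m\|^2$.

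Setting
\[
\|V\|_{\mathcal{C}}^2 := \|V\|_{\mathcal{A}_h}^2 + \Bigl(\tfrac{\tau\lambda_{N,N}}{c_0}\Bigr)^2\|\delta_x\delta_y V\|^2,
\]
so that the accumulated positive terms on the left dominate $c_0(\|U^m\|_{\mathcal{C}}^2-\|U^0\|_{\mathcal{C}}^2)$, I would collect the estimates, select an index $\mathcal{K}$ realizing $\max_{1\le n\le N}\|U^n\|_{\mathcal{C}}$, divide through, and apply the discrete Grönwall inequality (which is applicable because $\beta_{\bar\alpha}\in L_{1,\mathrm{loc}}(0,\infty)$) to obtain $\|U^N\|\le Q(\|U^0\|_{\mathcal{C}}+\tau\sum_{n=1}^N\|\bar F^n\|)$. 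The second inequality of the theorem then follows from $\bar F^n=\tau^{-1}\int_{t_{n-1}}^{t_n}\bar f(t)\,dt$ together with $\int_0^{t_N}\beta_{\bar\alpha}(t_N-s)\,ds\le Q$ and $|g(t)|\le Q$ from Lemma \ref{lemma3.1}, which give $\|\bar F^n\|\le Q(\|\bar u_0\|+\max_{0\le t\le t_N}\|f(t)\|)$.

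The main obstacle is synchronizing the first step with the subsequent ones: in \eqref{e5.11} the natural test function dictated by Lemma \ref{PIpositive} is $U^1$ rather than $U^{\frac{1}{2}}$, and the coefficient $\lambda_{1,1}$ differs from the averaged weights $\lambda_{n,n}/2$ used for $n\ge 2$, so the energy identity must be assembled so that a single monotone quantity appears on the left. A related subtlety is that Lemma \ref{PIpositive} controls only the full quadratic form, not each individual addend; consequently positivity must be established \emph{after} the discrete Green's identities have moved the difference operators off the test functions, and one must verify that the remaining $\mathcal{A}_1$- and $\mathcal{A}_2$-weighted inner products are still compatible with the hypothesis of Lemma \ref{PIpositive}.
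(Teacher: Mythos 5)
Your proposal follows essentially the same route as the paper: the same pairing of \eqref{e5.11} with $2\tau U^{1}$ and \eqref{e5.12} with $2\tau U^{n-\frac{1}{2}}$, positivity of the PI quadratic form via Lemma \ref{PIpositive} after the discrete Green's identities, the weight bounds \eqref{e4.18}--\eqref{e4.20}, the telescoping of the ADI perturbation using $\lambda_{1,1}=\cdots=\lambda_{N,N}$, and Gr\"{o}nwall. The only detail you gloss over is that, unlike in the first scheme, $U^{0}\neq 0$ here, so the summation by parts of the memory term leaves an extra $-\tilde{w}_{n-1}U^{0}$ contribution (bounded via $|\tilde{w}_{n}|\le Q\tau$), which is harmless since your final estimate already carries $\left\|U^{0}\right\|_{\mathcal{C}}$ on the right-hand side.
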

\begin{proof}
Taking the inner product of \eqref{e5.11} and \eqref{e5.12} with $2\tau U^{1}$
and $2\tau U^{n-\frac{1}{2}}$, respectively, and summing up for $n$ from $2$
to $m$ for $2\le m\le N$, we yield
\begin{equation}\label{e5.14}
\begin{aligned}
& 2\tau c_{0}\left(\mathcal{A}_{h}\delta_{t}U^{\frac{1}{2}}, U^{1}\right)+2\tau c_{0}\sum\limits_{n=2}^{m} \left(\mathcal{A}_{h}\delta_{t}U^{n-\frac{1}{2}}, U^{n-\frac{1}{2}} \right)\\
&+2\tau \sum\limits_{n=2}^{m} \sum\limits_{k=1}^{n-1} \tilde{w}_{n-k}  \left(\mathcal{A}_{h}\delta_t U^{k-\frac{1}{2}}, U^{n-\frac{1}{2}} \right) - 2\tau \left(\mathcal{P}_{\bar{\alpha}}^{\frac{1}{2}}\Lambda_{h}U,U^{1}\right)\\
& -2\tau\sum\limits_{n=2}^{m} \left(\mathcal{P}_{\bar{\alpha}}^{n-\frac{1}{2}}\Lambda_{h}U,U^{n-\frac{1}{2}}\right)+\frac{2\tau\left(\tau\lambda_{1,1}\right)^{2}}{c_{0}}\left(\delta_{x}^{2}\delta_{y}^{2}\delta_{t}U^{\frac{1}{2}},U^{1}\right)\\
&+\frac{2\tau\left(\tau\lambda_{n,n}\right)^{2}}{4c_{0}}\sum\limits_{n=2}^{m}\left(\delta_{x}^{2}\delta_{y}^{2}\delta_{t}U^{n-\frac{1}{2}},U^{n-\frac{1}{2}}\right)
=2\tau( \mathcal{A}_{h}\bar{F}^{1},  U^{1} )+2\tau \sum\limits_{n=2}^{m} \left( \mathcal{A}_{h}\bar{F}^{n},  U^{n-\frac{1}{2}} \right).
\end{aligned}
\end{equation}
First, similar to \eqref{e4.13}, it is obvious that
\begin{equation}\label{e5.15}
2\tau c_{0}\left(\mathcal{A}_{h}\delta_{t}U^{\frac{1}{2}}, U^{1}\right)+2\tau c_{0}\sum\limits_{n=2}^{m} \left(\mathcal{A}_{h}\delta_{t}U^{n-\frac{1}{2}}, U^{n-\frac{1}{2}} \right)\ge  c_{0}\left(\left\|U^{m}\right\|_{\mathcal{A}_{h}}^{2}-\left\|U^{0}\right\|_{\mathcal{A}_{h}}^{2}\right).
\end{equation}
Meanwhile, similar to \eqref{e4.14}, we have
\begin{equation}\label{e5.16}
\begin{aligned}
2\tau \sum\limits_{n=2}^{m} \sum\limits_{k=1}^{n-1} \tilde{w}_{n-k}  \left(\mathcal{A}_{h}\delta_t U^{k-\frac{1}{2}}, U^{n-\frac{1}{2}} \right)
\le& 2\sum\limits_{n=2}^{m}|\tilde{w}_{1}|\left\|U^{n-1}\right\|_{\mathcal{A}_{h}}\left\|U^{n-\frac{1}{2}}\right\|_{\mathcal{A}_{h}}
+2\sum\limits_{n=2}^{m}|\tilde{w}_{n-1}|\left\|U^{0}\right\|_{\mathcal{A}_{h}}\left\|U^{n-\frac{1}{2}}\right\|_{\mathcal{A}_{h}}\\
&+2\sum\limits_{n=3}^{m}\sum\limits_{k=2}^{n-1}|\tilde{w}_{k}-\tilde{w}_{k-1}|\left\|U^{n-k}\right\|_{\mathcal{A}_{h}}\left\|U^{n-\frac{1}{2}}\right\|_{\mathcal{A}_{h}}.
\end{aligned}
\end{equation}
Next, Lemma \ref{PIpositive} leads to
\begin{equation}\label{e5.17}
\begin{aligned}
&-\left(\mathcal{P}_{\bar{\alpha}}^{\frac{1}{2}}\Lambda_{h}U,U^{1}\right)-\sum\limits_{n=2}^{m} \left(\mathcal{P}_{\bar{\alpha}}^{n-\frac{1}{2}}\Lambda_{h}U,U^{n-\frac{1}{2}}\right)\\
=&-\left(\mathcal{P}_{\bar{\alpha}}^{\frac{1}{2}}\mathcal{A}_{2}\delta_{x}^{2}U,U^{1}\right)-\sum\limits_{n=2}^{m} \left(\mathcal{P}_{\bar{\alpha}}^{n-\frac{1}{2}}\mathcal{A}_{2}\delta_{x}^{2}U,U^{n-\frac{1}{2}}\right)
 -\left(\mathcal{P}_{\bar{\alpha}}^{\frac{1}{2}}\mathcal{A}_{1}\delta_{y}^{2}U,U^{1}\right)-\sum\limits_{n=2}^{m} \left(\mathcal{P}_{\bar{\alpha}}^{n-\frac{1}{2}}\mathcal{A}_{1}\delta_{y}^{2}U,U^{n-\frac{1}{2}}\right)\\
=& \left(\mathcal{P}_{\bar{\alpha}}^{\frac{1}{2}}\delta_{x}U,\delta_{x}U^{1}\right)_{\mathcal{A}_{2}}+\sum\limits_{n=2}^{m} \left(\mathcal{P}_{\bar{\alpha}}^{n-\frac{1}{2}}\delta_{x}U,\delta_{x}U^{n-\frac{1}{2}}\right)_{\mathcal{A}_{2}}+\left(\mathcal{P}_{\bar{\alpha}}^{\frac{1}{2}}\delta_{y}U,\delta_{y}U^{1}\right)_{\mathcal{A}_{1}}+\sum\limits_{n=2}^{m} \left(\mathcal{P}_{\bar{\alpha}}^{n-\frac{1}{2}}\delta_{y}U,\delta_{y}U^{n-\frac{1}{2}}\right)_{\mathcal{A}_{1}}\\
\geq& 0.
\end{aligned}
\end{equation}
Moreover, by an argument analogous to that used in deriving \eqref{e4.16}, one obtains for the last two terms on the left-hand side of \eqref{e5.14} that
\begin{equation}\label{e5.18}
\begin{aligned}
\frac{2\tau\left(\tau\lambda_{1,1}\right)^{2}}{c_{0}}\left(\delta_{x}^{2}\delta_{y}^{2}\delta_{t}U^{\frac{1}{2}},U^{1}\right)+\frac{2\tau\left(\tau\lambda_{n,n}\right)^{2}}{4c_{0}}\sum\limits_{n=2}^{m}\left(\delta_{x}^{2}\delta_{y}^{2}\delta_{t}U^{n-\frac{1}{2}},U^{n-\frac{1}{2}}\right)
\ge c_{0}\eta_{2}^{2}\left(\left\|\delta_{x}\delta_{y}U^{m}\right\|^{2}-4\left\|\delta_{x}\delta_{y}U^{0}\right\|^{2}\right),
\end{aligned}
\end{equation}
where we have employed the property
\begin{equation*}
\lambda_{1,1}=\lambda_{2,2}=\cdots=\lambda_{N,N}=\frac{\tau^{\bar{\alpha}}}{\Gamma(\bar{\alpha}+2)}.
\end{equation*}
Putting \eqref{e5.15}, \eqref{e5.16}, \eqref{e5.17} and \eqref{e5.18} into \eqref{e5.14},
and with the notation
\begin{equation*}
\left\|V\right\|_{\mathcal{C}}^{2}=\left\|V\right\|_{\mathcal{A}_{h}}^{2}+\eta_{2}^{2}\left\|\delta_{x}\delta_{y}V\right\|^{2},
\end{equation*}
we have
\begin{equation*}
\begin{aligned}
c_{0}\left\|U^{m}\right\|_{\mathcal{C}}^{2}
\le& 2\sum\limits_{n=2}^{m}|\tilde{w}_{1}|\left\|U^{n-\frac{1}{2}}\right\|_{\mathcal{C}}\left\|U^{n-1}\right\|_{\mathcal{A}_h}
+2\sum\limits_{n=2}^{m}|\tilde{w}_{n-1}|\left\|U^{n-\frac{1}{2}}\right\|_{\mathcal{C}}\left\|U^{0}\right\|_{\mathcal{A}_h}\\
&+\sum\limits_{n=3}^{m}\sum\limits_{k=2}^{n-1}|\tilde{w}_{k}-\tilde{w}_{k-1}|\left\|U^{n-k}\right\|_{\mathcal{C}}\left(\left\|U^{n-1}\right\|_{\mathcal{A}_h}+\left\|U^{n}\right\|_{\mathcal{A}_h}\right)\\
&+4c_{0}\left\|U^{0}\right\|_{\mathcal{C}}^{2}
+2\tau\left\|\bar{F}^{1}\right\|_{\mathcal{A}_{h}} \left\|U^{1} \right\|_{\mathcal{C}}+2\tau\sum\limits_{n=2}^{m}\left\|\bar{F}^{n}\right\|_{\mathcal{A}_{h}} \left\|U^{n-\frac{1}{2}} \right\|_{\mathcal{C}}.
\end{aligned}
\end{equation*}
Choosing a suitable $\mathcal{K}$ such that $\left\|U^{\mathcal{K}}\right\|_{\mathcal{C}}=\max\limits_{1\le n\le N}\left\|U^{n}\right\|_{\mathcal{C}}$, thus
\begin{equation}\label{e5.19}
\begin{aligned}
c_{0}\left\|U^{\mathcal{K}}\right\|_{\mathcal{C}}
\le& 2\sum\limits_{n=2}^{\mathcal{K}}|\tilde{w}_{1}|\left\|U^{n-1}\right\|_{\mathcal{A}_h}
+2\sum\limits_{n=2}^{\mathcal{K}}|\tilde{w}_{n-1}|\left\|U^{0}\right\|_{\mathcal{A}_h}
+2\tau\sum\limits_{n=1}^{\mathcal{K}}\left\|\bar{F}^{n}\right\|_{\mathcal{A}_{h}}\\
&+4c_{0}\left\|U^{0}\right\|_{\mathcal{C}}
+\sum\limits_{n=3}^{\mathcal{K}}\sum\limits_{k=2}^{n-1}|\tilde{w}_{k}-\tilde{w}_{k-1}|\left(\left\|U^{n-1}\right\|_{\mathcal{A}_{h}}+\left\|U^{n}\right\|_{\mathcal{A}_{h}}\right)\\
\le& 2\sum\limits_{n=1}^{N-1}|\tilde{w}_{1}|\left\|U^{n}\right\|_{\mathcal{A}_{h}}
+2\sum\limits_{n=1}^{N-1}|\tilde{w}_{n}|\left\|U^{0}\right\|_{\mathcal{A}_h}
+2\tau\sum\limits_{n=1}^{N}\left\|\bar{F}^{n}\right\|_{\mathcal{A}_{h}}\\
&+4c_{0}\left\|U^{0}\right\|_{\mathcal{C}}
+\sum\limits_{n=3}^{N}\sum\limits_{k=2}^{n-1}|\tilde{w}_{k}-\tilde{w}_{k-1}|\left(\left\|U^{n-1}\right\|_{\mathcal{A}_{h}}+\left\|U^{n}\right\|_{\mathcal{A}_{h}}\right).
\end{aligned}
\end{equation}
By \eqref{e4.18}-\eqref{e4.20}, Lemma \ref{space2} and the fact $|\tilde{w}_{n}|=|\frac{w_n+w_{n-1}}{2}|\le\frac{1}{2}\int_{t_{n-1}}^{t_{n+1}}|g'(t)|dt\le Q\tau$, then \eqref{e5.19} turns into
\begin{equation}\label{e5.20}
\frac{\sqrt{3}}{3}\left\|U^N\right\| \leq \left\|U^{\mathcal{K}}\right\|_{\mathcal{A}_{h}}
\leq Q\left( \left\|U^{0}\right\|_{\mathcal{C}} + \tau \sum\limits_{n=1}^{N} \left\|U^n\right\|
+ \tau \sum\limits_{n=1}^{N} \left\| \bar{F}^{n}\right\|\right).
\end{equation}
The use of Gr\"{o}nwall's lemma with
$\beta_{\bar{\alpha}}(t)\in L_{1,\text{loc}}(0,\infty)$ completes the proof.
\end{proof}

Next, we consider the convergence results of this scheme. Subtracting \eqref{e5.11}-\eqref{e5.13} from \eqref{e5.8}-\eqref{e5.10}, we get the following error equations
\begin{align}
&c_{0}\delta_t\mathcal{A}_{h}e_{i,j}^{\frac{1}{2}} -\lambda_{1,1}\Lambda_{h}e_{i,j}^{1}+\frac{\tau^{2}\lambda_{1,1}^{2}}{c_{0}}\delta_{x}^{2}\delta_{y}^{2}\delta_{t}e_{i,j}^{\frac{1}{2}}=(\tilde{R}_*)_{i,j}^1,\label{e5.21}\\
&c_{0}\delta_t\mathcal{A}_{h}e_{i,j}^{n-\frac{1}{2}} + \sum\limits_{k=1}^{n-1} \tilde{w}_{n-k} \delta_t \mathcal{A}_{h}e_{i,j}^{k-\frac{1}{2}}-\mathcal{P}_{\bar{\alpha}}^{n-\frac{1}{2}}\Lambda_{h}e_{i,j}+\frac{\tau^{2}\lambda_{n,n}^{2}}{4c_{0}}\delta_{x}^{2}\delta_{y}^{2}\delta_{t}e_{i,j}^{n-\frac{1}{2}}=(\tilde{R}_*)_{i,j}^n,\label{e5.22}\\
&e_{i,j}^{0}=0,\quad (i,j)\in \omega,\qquad  e_{i,j}^{n}=0,\quad (i,j)\in \partial\omega,~0\le n\le N,\label{e5.23}
\end{align}
with $e_{i,j}^n = u_{i,j}^n - U_{i,j}^n$ for $(i,j)\in\omega$,
$0\leq n \leq N$.

\vskip 1mm
\begin{theorem}
Let $\{u_{i,j}^{n}\mid(i,j)\in\omega,~1\le n\le N\}$ and
$\{U_{i,j}^{n}\mid(i,j)\in\omega,~1\le n\le N\}$
be the solutions of \eqref{Model2} and \eqref{e5.11}-\eqref{e5.13}, respectively.
Then, based on Lemmas \ref{lemma3.1} and Assumption \ref{regularity}, we have the error estimate
\begin{equation*}
\max\limits_{1\leq n \leq N} \|u^n-U^n\| \leq Q \left(\tau^{2}+h_{1}^{4}+h_{2}^{4}\right).
\end{equation*}
\end{theorem}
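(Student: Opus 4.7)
My plan is to reduce the convergence estimate to a sum of local truncation errors via the stability result already proven for this scheme. Applying the energy estimate \eqref{e5.20} to the error system \eqref{e5.21}--\eqref{e5.23} and using $e^{0}=0$, then invoking a discrete Gr\"onwall inequality with the kernel $\beta_{\bar{\alpha}}\in L_{1,\mathrm{loc}}(0,\infty)$, I would obtain
\begin{equation*}
\max_{1\le n\le N}\|e^{n}\|\le Q\tau\sum_{n=1}^{N}\|(\tilde R_{*})^{n}\|\le Q\tau\sum_{n=1}^{N}\|(R_{*})^{n}\|+Q(h_{1}^{4}+h_{2}^{4}),
\end{equation*}
so it suffices to show $\tau\sum_{n=1}^{N}\|(R_{*})^{n}\|\le Q\tau^{2}$, where $(R_{*})^{n}=(R_{5})^{n}-(R_{4})^{n}-(R_{2}^{*})^{n-\frac{1}{2}}$.

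Next, I would bound the three truncation components separately. For $(R_{2}^{*})^{n-\frac{1}{2}}$, the analysis from \eqref{e4.26} applies verbatim, since replacing $\tilde u$ by $u$ leaves $\partial_{s}^{2}\tilde u=\partial_{s}^{2}u$ unchanged, yielding $\tau\sum_{n}\|(R_{2}^{*})^{n-\frac{1}{2}}\|\le Q\tau^{2}$. For $(R_{4})^{n}$, which is the trapezoidal quadrature error for $\widehat G$, a Taylor expansion gives $|(R_{4})^{n}|\le Q\tau\int_{t_{n-1}}^{t_{n}}|\widehat G''(t)|\,dt$ for $n\ge 2$, and combining with the crucial bound $|\partial_{t}^{2}\widehat G(t)|\le Qt^{\alpha_{0}-2}$ derived in \eqref{e5.2} (this is where Lemma \ref{lemma3.1} enters), I would get
\begin{equation*}
\tau\sum_{n=2}^{N}\|(R_{4})^{n}\|\le Q\tau^{2}\int_{0}^{T}t^{\alpha_{0}-2}\,dt\le Q\tau^{2},
\end{equation*}
using $\alpha_{0}>1$; the $n=1$ term is handled separately by a direct estimate combined with $\widehat G(0)=0$.

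For $(R_{5})^{n}$, defined in \eqref{e5.5} as the averaged PI quadrature error, I would split the interval sum and treat subintervals according to the definition of $\breve u$. For $k\ge 2$, the innermost integrand is $\Delta u(s)-\Delta u(t_{k-\frac{1}{2}})$, which by a second-order Taylor expansion about $t_{k-\frac{1}{2}}$ produces a linear-in-$(s-t_{k-\frac{1}{2}})$ term whose spatial contribution cancels upon the outer averaging $\frac{1}{\tau}\int_{t_{n-1}}^{t_{n}}$ (in the spirit of the McLean--Mustapha analysis) together with a quadratic remainder controlled by $|\partial_{t}^{2}\Delta u|\le Qt^{\alpha_{0}-2}$. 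For $k=1$, where $\breve u=u^{1}$, a direct bound gives a contribution of order $\tau\int_{0}^{t_{1}}(t-s)^{\bar\alpha-1}s^{\alpha_{0}-2}\,ds$, which integrates to $O(\tau^{\alpha_{0}+\bar{\alpha}})=O(\tau^{2\alpha_{0}-1})\le O(\tau)$; this contributes to only finitely many outer indices and gives $O(\tau^{2})$ after multiplication by $\tau$. Summing across $n$ with the weakly singular kernel, the total contribution is again $O(\tau^{2})$.

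The main obstacle is the precise estimate of $(R_{5})^{n}$ near $t=0$: the singular regularity $|\partial_{t}^{2}\Delta u|\le Qt^{\alpha_{0}-2}$ and $|\partial_{t}^{3}u|\le Qt^{\alpha_{0}-3}$ in Assumption \ref{regularity} must be reconciled with the desired $\tau^{2}$ rate, and only the averaging $\frac{1}{\tau}\int_{t_{n-1}}^{t_{n}}\cdots dt$ inherent to the PI rule makes this possible (this is exactly what distinguishes the present ``accurately second-order'' scheme from the formally second-order scheme of Section \ref{sec3}, which loses $\tau^{\alpha_{0}}$ through $(R_{1})^{n-\frac{1}{2}}$). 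Once these three component bounds are combined with the spatial $O(h_{1}^{4}+h_{2}^{4})$ contribution from the compact operator $\mathcal A_{h}$, the theorem follows.
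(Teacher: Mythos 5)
Your proposal follows essentially the same route as the paper: the stability bound \eqref{e5.20} applied to the error system \eqref{e5.21}--\eqref{e5.23} plus discrete Gr\"onwall reduces matters to $\tau\sum_n\|(R_*)^n\|$, and the three components are then bounded exactly as in the paper, via \eqref{e4.26} for $(R_2^*)$, the bound \eqref{e5.2} on $\widehat G''$ for $(R_4)$, and the McLean--Mustapha PI-rule analysis (the paper inserts the piecewise-linear interpolant $u^*$ as the intermediate comparison) for $(R_5)$. One small inaccuracy: the first-subinterval contribution to $(R_5)^n$ appears in \emph{every} outer index $n$, not finitely many, but its total is still $O(\tau^2)$ after summation because the convolution weights $\int_0^{t_1}\beta_{\bar\alpha}(t-s)\,ds$ are summable, so the conclusion is unaffected.
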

\begin{proof}
On the basis of \eqref{e5.20}, in combination with \eqref{e5.21}, \eqref{e5.22} and
\eqref{e5.23}, we have
\begin{equation*}
\left\|e^N\right\| \leq  Q \left(\tau \sum\limits_{n=1}^{N} \|e^n\|
+ \tau \sum\limits_{n=1}^{N} \left\| (\tilde{R}_{*})^{n} \right\|\right),
\end{equation*}
which follows from discrete Gr\"{o}nwall's lemma that
\begin{equation}\label{e5.24}
\left\|e^N\right\| \leq  Q \tau \sum\limits_{n=1}^{N} \left\| (R_5)^{n} - (R_4)^{n} - (R_2^*)^{n-\frac{1}{2}}\right\|+Q\left(\tau^{2}+h_{1}^{4}+h_{2}^{4}\right).
\end{equation}
Then, we shall analyze all the terms on the right-hand side of \eqref{e5.24}. From
\eqref{e5.3}, it is obvious that
\begin{equation*}
(R_4)^n=\frac{1}{\tau}\int_{t_{n-1}}^{t_n}\widehat{G}(t)dt-\widehat{G}(t_{n-\frac{1}{2}})+\left[\widehat{G}(t_{n-\frac{1}{2}})-\frac{\widehat{G}(t_n)+\widehat{G}(t_{n-1})}{2}\right].
\end{equation*}
Hence, by considering \eqref{e5.2} together with the Taylor expansion, we arrive at
\begin{equation}\label{e5.25}
\begin{aligned}
\tau\sum\limits_{n=1}^N\|(R_4)^n\|&\leq\tau\int_0^{t_{\frac{1}{2}}}t\left\|\widehat{G}''(t)\right\|dt+\tau^2\int_{t_{\frac{1}{2}}}^{t_1}\left\|\widehat{G}''(t)\right\|dt+2\sum\limits_{n=2}^N\tau^2\int_{t_{n-1}}^{t_n}\left\|\widehat{G}''(t)\right\|dt\\
&\leq Q\tau\int_{0}^{t_{\frac{1}{2}}}t^{\alpha_{0}-1}dt+Q\tau^{2}\int_{t_{\frac{1}{2}}}^{t_{1}}t^{\alpha_{0}-2}dt+Q\tau^{2}\sum_{n=2}^{N}\int_{t_{n-1}}^{t_{n}}t^{\alpha_{0}-2}dt\leq Q\tau^{2}.
\end{aligned}
\end{equation}
Next, we focus on the estimation of $(R_{5})^{n}$. Splitting \eqref{e5.5} as follows
\begin{equation*}
(R_{5})^{n}=\frac{1}{\tau}\int_{t_{n-1}}^{t_{n}}\int_{0}^{t}\beta_{\bar{\alpha}}(t-s)\left[\Delta u(s)-\Delta u^{*}(s)\right]dsdt+\frac{1}{\tau}\int_{t_{n-1}}^{t_{n}}\int_{0}^{t}\beta_{\bar{\alpha}}(t-s)\left[\Delta u^{*}(s)-\Delta\breve{u}(s)\right]dsdt,
\end{equation*}
with
\begin{equation*}
u^*(s)=
\begin{cases}
u(t_1),&0<s<\tau,\\
\tau^{-1}[(t_i-s)u(t_{i-1})+(s-t_{i-1})u(t_i)],&t_{i-1}<s<t_i,\quad i\ge2.
\end{cases}
\end{equation*}
Subsequently, according to \cite{Mclean} and Assumption \ref{regularity}, we yield
\begin{equation}\label{e5.26}
\begin{aligned}
\tau\sum_{n=1}^N\|(R_5)^n\|\le& Q_{\bar{\alpha},T}\left(\int_{0}^{t_{1}}t\left\|\Delta\partial_{t}u(t)\right\|dt+\tau^{2}\int_{t_{1}}^{t_{N}}\left\|\Delta\partial_{t}^{2}u(t)\right\|dt\right)\\
&+Q_{\bar{\alpha},T}\left(\tau\int_{t_{1}}^{t_{2}}\|\Delta\partial_{t}u(t)\|\:dt+\tau^{2}\int_{t_{1}}^{t_{N}}\left\|\Delta\partial_{t}^{2}u(t)\right\|dt\right)\\
\leq &Q\tau\int_0^{2\tau}t^{\alpha_0-1}dt+Q\tau^2\int_\tau^{t_N}t^{\alpha_0-2}dt
\leq Q\tau^2.
\end{aligned}
\end{equation}
Finally, substituting \eqref{e5.25},\eqref{e5.26} and \eqref{e4.26} into \eqref{e5.24},
it turns into
\begin{equation}\nonumber
\left\|e^N\right\| \leq Q\left(\tau^{2}+h_{1}^{4}+h_{2}^{4}\right).
\end{equation}
Thus, the proof is completed.
\end{proof}

\section{Numerical experiment}\label{sec5}
In this section, we carry out numerical experiments to validate the accuracy and the efficiency of two proposed fully discrete schemes, i.e., formally second-order ADI compact difference scheme (F2OACD) and accurately second-order ADI compact difference scheme (A2OACD). Below we take $\Omega=(0,1)\times (0,1)$, $h=h_1=h_2=\frac{1}{M}$ and $T=1$, with $M=M_{1}=M_{2}$.

To measure the convergence of the proposed methods, we denote the errors and convergence rates in time
\begin{equation*}
E(\tau,h)=\max\limits_{0\le n\le N}\sqrt{h^{2}\sum_{i=1}^{M-1}\sum_{j=1}^{M-1}\left|U_{i,j}^{n}(\tau,h)-U_{i,j}^{2n}(\frac{\tau}{2},h)\right|^{2}},~Rate^{\tau}=\log_2\left(\frac{E(2\tau,h)}{E(\tau,h)}\right),
\end{equation*}
along with the errors and convergence rates in space
\begin{equation*}
S(\tau,h)=\sqrt{h^{2}\sum_{i=1}^{M-1}\sum_{j=1}^{M-1}\left|U_{i,j}^{N}(\tau,h)-U_{2i,2j}^{N}(\tau,\frac{h}{2})\right|^{2}},~Rate^{h}=\log_2\left(\frac{S(\tau,2h)}{S(\tau,h)}\right).
\end{equation*}

\vskip 1mm
\textbf{Example 1.}
In this example, we test the case with $u_{0}(x,y)=\sin(\pi x)\sin(\pi y)$,
$\bar{u}_{0}(x,y)=\sin(2\pi x)\sin(2\pi y)$ and $f(x,y,t)=0$. Meanwhile, to
satisfy the very weak conditions $\alpha'(0)= 0$, we choose
$\alpha(t)=\alpha_{0}+\frac{1}{11}t^{2}$ and limit $1<\alpha_{0}\le 1.9$ such that
$\alpha(t)\in (1,2)$.

\begin{table}\footnotesize\centering
\caption{Discrete errors and temporal convergence rates by fixing $M=16$ with different $\alpha_{0}$ for Example 1.}
\label{table1}  
\begin{tabular}{ccccccc}
\hline\noalign{\smallskip}
\multirow{2}{*}{$\alpha_{0}$} &\multirow{2}{*}{$N$} &\multicolumn{2}{c}{F2OACD scheme} & &  \multicolumn{2}{c}{A2OACD scheme} \\
\cline{3-4} \cline{6-7}
& &   $E(\tau,h)$    & $Rate^{\tau}$    & & $E(\tau,h)$    & $Rate^{\tau}$ \\
\noalign{\smallskip}\hline\noalign{\smallskip}
       & 16  & 5.9599e-02 & *    & & 4.8255e-02 & *    \\
       & 32  & 2.5695e-02 & 1.21 & & 1.3225e-02 & 1.87 \\
$1.20$ & 64  & 1.0927e-02 & 1.23 & & 3.5678e-03 & 1.89 \\
       & 128 & 4.7918e-03 & 1.19 & & 8.6806e-04 & 2.04 \\
       & 256 & 2.1244e-03 & 1.17 & & 2.0957e-04 & 2.05 \\
\noalign{\smallskip}\hline\noalign{\smallskip}
       & 32  & 4.0199e-03 & *    & & 4.0991e-03 & *	\\
       & 64  & 1.2671e-03 & 1.67 & & 8.7841e-04 & 2.22 \\
$1.50$ & 128 & 4.1822e-04 & 1.60 & & 2.0193e-04 & 2.12 \\
       & 256 & 1.4427e-04 & 1.54 & & 4.8330e-05 & 2.06 \\
       & 512 & 5.1424e-05 & 1.49 & & 1.1830e-05 & 2.03 \\
\noalign{\smallskip}\hline\noalign{\smallskip}
       & 8   & 7.0889e-02 & *    & & 5.6004e-02 & *    \\
       & 16  & 1.8802e-02 & 1.91 & & 1.3043e-02 & 2.10 \\
$1.90$ & 32  & 4.8048e-03 & 1.96 & & 3.0786e-03 & 2.08 \\
       & 64  & 1.2164e-03 & 1.98 & & 7.3910e-04 & 2.06 \\
       & 128 & 3.0618e-04 & 1.99 & & 1.7691e-04 & 2.06 \\
\noalign{\smallskip}\hline
\end{tabular}
\end{table}

\begin{table}\footnotesize\centering
\caption{Discrete errors and spatial convergence rates by fixing $N=32$ with different $\alpha_{0}$ for Example 1.}
\label{table2}  
\begin{tabular}{ccccccc}
\hline\noalign{\smallskip}
\multirow{2}{*}{$\alpha_{0}$} &\multirow{2}{*}{$M$} &\multicolumn{2}{c}{F2OACD scheme} & &  \multicolumn{2}{c}{A2OACD scheme} \\
\cline{3-4} \cline{6-7}
& &   $S(\tau,h)$    & $Rate^{h}$    & & $S(\tau,h)$    & $Rate^{h}$ \\
\noalign{\smallskip}\hline\noalign{\smallskip}
       & 4  & 8.1301e-04 & *    & & 1.1750e-04 & *    \\
       & 8  & 4.9771e-05 & 4.03 & & 6.7722e-06 & 4.12 \\
$1.30$ & 16 & 3.0949e-06 & 4.01 & & 4.1509e-07 & 4.03 \\
       & 32 & 1.9318e-07 & 4.00 & & 2.5818e-08 & 4.01 \\
       & 64 & 1.2070e-08 & 4.00 & & 1.6117e-09 & 4.00 \\
\noalign{\smallskip}\hline\noalign{\smallskip}
       & 4  & 7.1109e-04 & *    & & 4.7537e-05 & *	  \\
       & 8  & 4.3471e-05 & 4.03 & & 2.9915e-06 & 3.99 \\
$1.60$ & 16 & 2.7032e-06 & 4.01 & & 1.8425e-07 & 4.02 \\
       & 32 & 1.6874e-07 & 4.00 & & 1.1464e-08 & 4.01 \\
       & 64 & 1.0543e-08 & 4.00 & & 7.1564e-10 & 4.00 \\
\noalign{\smallskip}\hline\noalign{\smallskip}
       & 4  & 4.5057e-03 & *    & & 3.0726e-03 & *    \\
       & 8  & 2.6665e-04 & 4.08 & & 1.7529e-04 & 4.13 \\
$1.90$ & 16 & 1.6485e-05 & 4.02 & & 1.0763e-05 & 4.03 \\
       & 32 & 1.0277e-06 & 4.00 & & 6.6997e-07 & 4.01 \\
       & 64 & 6.4188e-08 & 4.00 & & 4.1831e-08 & 4.00 \\
\noalign{\smallskip}\hline
\end{tabular}
\end{table}

\vskip 1mm
In Table \ref{table1}, we present the errors and temporal convergence rates
when simulating the problem \eqref{VtFDEs}-\eqref{ibc}, for both the F2OACD scheme
and the A2OACD scheme, where we use a very suitable spatial step size by fixing $M=16$
at $\alpha_{0}\in\{1.20,1.50,1.90\}$. Then, we observe from these results that the F2OACD
scheme has the $\alpha_{0}$-order temporal accuracy and the A2OACD scheme can
exhibit a rate of 2 in the temporal direction, which substantiates the
theoretical analysis.

Subsequently, we investigate the performance in space of the F2OACD
scheme and the A2OACD scheme in Table \ref{table2}. This table displays errors and
spatial convergence rates of both schemes for fixed $N=32$ when $\alpha_{0}=1.3, 1.6, 1.9$,
respectively. As can be seen from the numerical results, both schemes have
fourth-order spatial convergence rates. Both of these findings align with
the theoretical predictions.

In \cite{Qiu1}, for the one-dimensional diffusion-wave model related to
\eqref{VtFDEs}-\eqref{ibc}, the authors used the second-order BDF formula
combined with the second-order convolution quadrature rule for the temporal
discretization, to construct a numerical scheme of order $\alpha_{0}$.
It is not difficult to observe that the F2OACD scheme in our paper is similar to
this scheme in terms of time discretization, except that we utilize the
Crank-Nicolson method combined with the trapezoidal convolution quadrature rule.

In the following, we shall present some numerical results of the $\alpha_0$-order
scheme in \cite{Qiu1} and compare them with our proposed F2OACD scheme. To this end, the compact difference method is applied to discretize the spatial direction, thereby constructing a fully discrete scheme for comparison. Meanwhile, we apply the ADI technique to accelerate the computation.

\begin{table}\footnotesize\centering
\caption{Comparison between discrete errors and temporal convergence rates of the
$\alpha_0$-order scheme in \cite{Qiu1} and the F2OACD scheme with $M=16$ for
Example 1.}
\label{table3}  
\resizebox{\textwidth}{!}{
\begin{tabular}{cccccccccccc}
\hline\noalign{\smallskip}
\multirow{3}{*}{$N$} &\multicolumn{5}{c}{$\alpha_0=1.50$} & &\multicolumn{5}{c}{$\alpha_0=1.75$} \\
\cline{2-6} \cline{8-12}
& \multicolumn{2}{c}{$\alpha_0$-order scheme in \cite{Qiu1}} & & \multicolumn{2}{c}{F2OACD scheme}& & \multicolumn{2}{c}{$\alpha_0$-order scheme in \cite{Qiu1}} & & \multicolumn{2}{c}{F2OACD scheme} \\
\cline{2-3} \cline{5-6} \cline{8-9} \cline{11-12}
& $E(\tau,h)$    & $Rate^{\tau}$    & & $E(\tau,h)$    & $Rate^{\tau}$ && $E(\tau,h)$    & $Rate^{\tau}$    & & $E(\tau,h)$    & $Rate^{\tau}$ \\
\noalign{\smallskip}\hline\noalign{\smallskip}
128  & 3.7104e-03 & *    & & 4.1822e-04 & *    & & 1.6912e-03 & *    & & 2.1335e-04 & *    \\
256  & 1.3848e-03 & 1.42 & & 1.4427e-04 & 1.54 & & 5.1165e-04 & 1.72 & & 6.3718e-05 & 1.74 \\
512  & 4.9975e-04 & 1.47 & & 5.1424e-05 & 1.49 & & 1.5291e-04 & 1.74 & & 1.8969e-05 & 1.75 \\
1024 & 1.7797e-04 & 1.49 & & 1.8644e-05 & 1.46 & & 4.5547e-05 & 1.75 & & 5.6418e-06 & 1.75 \\
2048 & 6.3084e-05 & 1.50 & & 6.7951e-06 & 1.46 & & 1.3551e-05 & 1.75 & & 1.6775e-06 & 1.75 \\
\noalign{\smallskip}\hline
\end{tabular}}
\end{table}

Table \ref{table3} lists and compares the discrete errors and associated
temporal convergence rates of the $\alpha_0$-order scheme in \cite{Qiu1}
and the F2OACD scheme by taking different values of $\alpha_0$ when fixed $M=16$.
As can be seen from the results in this table, under the same temporal mesh step size, the discrete errors of F2OACD scheme is much smaller than that of $\alpha_0$-order scheme in \cite{Qiu1}. More precisely, when simulating the model \eqref{VtFDEs}-\eqref{ibc}, the discrete errors produced by these two numerical
schemes differ by nearly an order of magnitude. Thus, this is one of our significant advantages over the existing method in \cite{Qiu1}.

\vskip 2mm
\textbf{Example 2.}
In this example, let $u_{0}(x,y)=\sin(\pi x)\sin(\pi y)$,
$\bar{u}_{0}(x,y)=0$ and $f(x,y,t)=0$. Then, we select $\alpha(t)=\alpha_{0}+\frac{t^2}{3+e^{2t}}$ such that $\alpha'(0)=0$. Besides, for satisfying $\alpha(t)\in (1,2)$,
we limit $1<\alpha_{0}\le 1.9$.

Here, in order to highlight the advantages of proposed ADI schemes for solving problem
\eqref{VtFDEs}-\eqref{ibc} in two-dimensional case, we shall provide corresponding numerical results of formally second-order standard compact difference scheme and accurately second-order standard compact difference scheme. For the sake of clarity, the two methods are presented directly below.

Here, to highlight the advantages of the proposed ADI schemes for solving problem \eqref{VtFDEs}–\eqref{ibc} in two dimensions, we compare them with the formally second-order and the accurately second-order standard compact difference schemes. For clarity, we briefly outline these two reference methods below.

\textbf{I. Formally second-order standard compact difference (F2OSCD):}
\begin{align*}
&\delta_t\mathcal{A}_{h}\widetilde{U}_{i,j}^{n-\frac{1}{2}} + \sum\limits_{k=1}^{n} \tilde{w}_{n-k} \delta_t\mathcal{A}_{h}\widetilde{U}_{i,j}^{k-\frac{1}{2}} - \tau^{\bar{\alpha}} \left( \sum\limits_{p=0}^n \chi_{p}^{(\bar{\alpha})} \Lambda_{h} \widetilde{U}_{i,j}^{n-p-\frac{1}{2}} +  \hat{\rho}_n^{(\bar{\alpha})} \Lambda_{h} \widetilde{U}_{i,j}^0 \right)\\
=&\mathcal{A}_{h}\mathcal{F}_{i,j}^{n-\frac{1}{2}},\qquad (i,j)\in\omega,\quad 1\le n\le N,\\
&\widetilde{U}_{i,j}^{0}=0,\quad (i,j)\in \omega,\qquad  \widetilde{U}_{i,j}^{n}=0,\quad (i,j)\in \partial\omega,~0\le n\le N,\\
&U_{i,j}^n=\widetilde{U}_{i,j}^{n}+u_0(x_i,y_j),\qquad (i,j)\in\omega,\quad 1\le n\le N.
\end{align*}

\textbf{II. Accurately second-order standard compact difference (A2OSCD):}
\begin{align*}
&\delta_t\mathcal{A}_{h}U_{i,j}^{n-\frac{1}{2}} + \sum\limits_{k=1}^{n} \tilde{w}_{n-k} \delta_t \mathcal{A}_{h}U_{i,j}^{k-\frac{1}{2}}-\mathcal{P}_{\bar{\alpha}}^{n-\frac{1}{2}}\Lambda_{h}U_{i,j}=\mathcal{A}_{h}\bar{F}_{i,j}^n,\quad (i,j)\in\omega,\; 1\le n\le N,\\
&U_{i,j}^{0}=u_0(x_i,y_j),\quad (i,j)\in \omega,\qquad  U_{i,j}^{n}=0,\quad (i,j)\in \partial\omega,\quad 0\le n\le N.
\end{align*}
At this stage, we apply the four numerical schemes to problem \eqref{VtFDEs}–\eqref{ibc}, respectively.

\begin{figure}
\centering
\includegraphics[width=.6\textwidth]{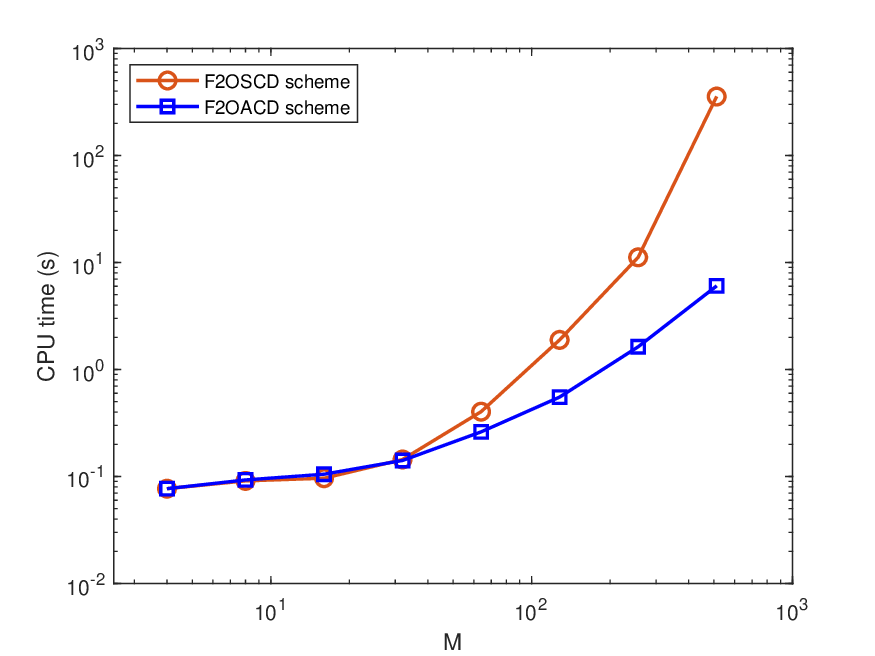}
\caption{The CPU time of F2OSCD and F2OACD schemes under $N=32$, $\alpha_{0}=1.3$ for Example 2.}
\label{Fig1}
\end{figure}

\begin{figure}
\centering
\includegraphics[width=.6\textwidth]{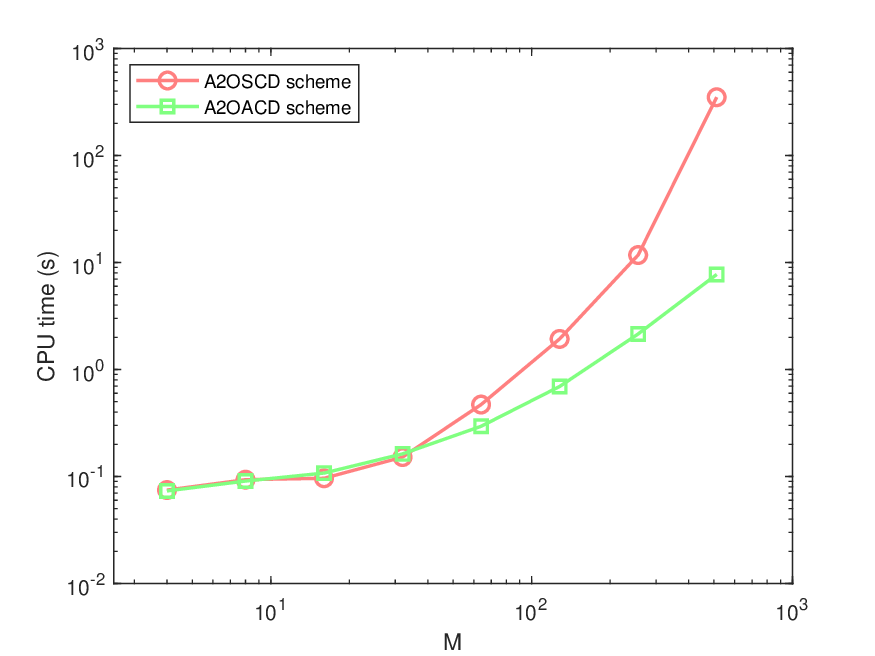}
\caption{The CPU time of A2OSCD and A2OACD schemes under $N=32$, $\alpha_{0}=1.3$ for Example 2.}
\label{Fig2}
\end{figure}

To illustrate the acceleration effect of the ADI methods, we plot the comparisons of CPU time
between the F2OSCD scheme and F2OACD scheme for different values of $M$ under the condition of $N=32$, $\alpha_{0}=1.3$ in Figure \ref{Fig1}. Meanwhile, under the same parameter selection, the CPU time comparisons between the A2OSCD scheme and A2OACD scheme are shown in Figure \ref{Fig2}. It can be clearly seen from Figures \ref{Fig1} and \ref{Fig2} that when $M$ is large, the ADI methods can greatly save the computational cost.

\section*{Declarations}

\noindent {\textbf{Conflict of interest}}: The authors declare no competing financial interests or personal relationships that could have influenced this work.

\vskip 2mm
\noindent {\textbf{Acknowledgements}}: The authors would like to express their sincere gratitude to Prof. Xiangcheng Zheng for his valuable suggestions and insightful discussions on potential improvement strategies for this paper.

\vskip 2mm
\noindent {\textbf{Funding}}: This work is supported by the Scientific Research Fund Project of Yunnan Provincial Education Department (No. 2024J0642), the Yunnan Fundamental Research Projects (No. 202401AU070104), the Scientific Research Fund Project of Yunnan University of Finance and Economics (No. 2024D38), and Postdoctoral Fellowship Program of CPSF (No. GZC20240938).

\vskip 2mm
\noindent {\textbf{Data Availability}}: The datasets are available from the corresponding author upon reasonable request.


\end{document}